\newtheorem{lemma}{Lemma}
\newtheorem{theorem}{Theorem}
\newtheorem{corollary}{Corollary}
\newtheorem{remark}{Remark}
\newtheorem{thma}{Theorem}
\def\bl{\begin{lemma}}
\def\bt{\begin{theorem}}
\def\el{\end{lemma}}
\def\et{\end{theorem}}
\def\bp{\begin{proof}}
\def\ep{\end{proof}}
\def\bc{\begin{corollary}}
\def\ec{\end{corollary}}
\def\bc{\begin{remark}}
\def\ec{\end{remark}}
\subjclass[2000]{Primary 47G10; Secondary30H20,47B38, 45P05.} \keywords{Volterra  operator, integral operator, norm, essential norm,  weak compactness, weighted Bergman spaces}
\date{\today}
\begin{document}

\title[Essential norm]{Essential Norms and Weak Compactness of Integration Operators between Weighted Bergman Spaces}
\author{Santeri Miihkinen, Pekka J. Nieminen, and Wen Xu }
\address{Santeri Miihkinen:\ Department of Mathematics and Statistics, University of Helsinki, Box 68, 00014 Helsinki, Finland}
         \email{santeri.miihkinen@helsinki.fi}

\address{Pekka J. Nieminen:\ Department of Mathematics and Statistics, University of Helsinki, Box 68, 00014 Helsinki, Finland}
         \email{pjniemin@cc.helsinki.fi}

\address{Wen Xu:\ Department of Physics and Mathematics, University of Eastern Finland, P. O. Box 111, FI-80101, Joensuu, Finland}
          \email{wenx@uef.fi}
\thanks{}

\begin{abstract}
We consider Volterra-type integration operators $T_g$ 
between Bergman spaces induced by weights 
satisfying a doubling property. We derive estimates for the operator norms, essential and weak essential norms of $T_g: A_\omega^p \to A_\omega^q$, $0<p\leq q<\infty$. In particular, the operator $T_g: A_\omega^1\to A_\omega^1$ is weakly compact if and only if it is compact.
\end{abstract}
 \maketitle
\section{Introduction}\label{s1}
Let $\mathbb D$ be the unit disk in the complex plane  and $\mathbb T$ be the boundary of $\mathbb D$. Let $H(\mathbb D)$ be the algebra of all analytic functions in $\mathbb D$. For $g\in H(\mathbb D)$, we consider the 
generalized Volterra integration operator $T_g$ defined by
 $$T_g(f)(z)=\int_0^z f(\zeta)g'(\zeta)d\zeta, \quad z \in \mathbb{D}$$ for $f\in H(\mathbb D)$. The main purpose of the paper is to derive estimates for the operator norms and essential norms of $T_g: A_\omega^p\to A_\omega^q$, $0<p\leq q < \infty,$ as well as  weak essential norms of $T_g$ on $A_\omega^1$, where $A_\omega^p$ is the Bergman space induced by  $\omega$ in the class $\widehat{D}$ which consists of radial weights satisfying  the doubling property $\int_r^1\omega(s)ds\leq C\int_{\frac{1+r}{2}}^1 \omega(s)ds$ with $C = C(\omega) > 0$.  Essential norms of $T_g$ between classical weighted Bergman spaces have been estimated by R\"{a}tty\"{a} in \cite{R} for $1<p\leq q < \infty$. Later essential norms of $T_g$ on Hardy spaces,  BMOA and the Bloch space have been investigated  in \cite{LMN,LLX}.

Let $X$ and $Y$  be  complete metric spaces. For a bounded linear operator $T: ~ X \to ~ Y$,  the essential norm  (resp.  weak essential norm),  denoted by $\|T\|_{e,X\rightarrow Y}$  ( resp. $\|T\|_{w,X\to Y}$), is  the distance of $T$ (in the operator norm) from the closed ideal of compact operators (resp. weakly compact operators) $K: X\rightarrow Y$. Here an operator $K:X\to Y$ is weakly compact if $\overline{K(B)}$ is compact in the weak topology of Y, where $B$ is the unit ball of $X$. If either $X$ or $Y$ is reflexive, then every bounded operator $T: X\to Y$ is weakly compact. Since $A_\omega^1$ is nonreflexive, there are bounded operators on $A_\omega^1$ which are not weakly compact. If $A^1_\omega$ has so-called Schur property, i.e.\ weakly convergent sequences in $A^1_\omega$ are also norm convergent, then the class of weakly compact operators on $A^1_\omega$ coincides with the class of compact operators on $A^1_\omega$. We do not know if this is the case, therefore we also consider the weak compactness of the operator $T_g$ on $A^1_\omega$.

There are some previous results on the weak compactness of $T_g$. For example, it has been shown in \cite{LMN} that the compactness and weak compactness of the operator $T_g$ are equivalent on Hardy space $H^1$ and $BMOA.$ In the case of $BMOA$ a different proof of this fact was obtained in an independent work of Blasco et al. \cite{B} using different techniques.

The presence of large class of weights in our setting brings its own difficulties which were not present in the previous works concerning essential norms of operator $T_g$. For example, Littlewood-Paley type formula is usually used to get rid of the integral in the definition of $T_g.$ However, there is no such formula in general for $A^p_\omega, \, \omega \in \widehat{D}$ unless $p =2$, see \cite[Chapter 4]{PR}. In order to circumvent this problem we had to use different equivalent norms inherited from the theory of Hardy spaces, see \cite[Chapter 4]{PR}.

  For each radial weight $\omega$, its {\it associated weight} $\omega^*$ is defined by
$$\omega^*(z)=\int_{|z|}^1\omega(s)s\log\frac{s}{|z|}ds,\quad z\in \mathbb D\setminus\{0\}.$$ For $\alpha\geq 1$ and $\omega\in \widehat{D}$, the space $\mathcal C^\alpha(\omega^*)$ consists of $g\in H(\mathbb D)$ such that
 $$\|g\|_{\mathcal C^\alpha(\omega^*)}=|g(0)|+ \|g\|_{*,\alpha,\omega} <\infty,$$ where
$$\|g\|_{*,\alpha,\omega} = \sup_{I \subset \mathbb{T}} \sqrt{\frac{\int_{S(I)}|g'(z)|^2\omega^*(z)dA(z)}{(\omega(S(I)))^\alpha}}$$ is a seminorm on $C^\alpha(\omega^*)$, $S(I)=\{re^{i\theta}\in \mathbb D: e^{i\theta}\in I, 1-|I|\leq r<1\}$ is the Carleson square associated with $I\subseteq \mathbb T$, $|E|$ is the Lebesgue measure of $E\subseteq \mathbb T$  and $\omega(S(I)) = \int_{S(I)}\omega(z)dA(z)$. We associate each $a\in \mathbb D\setminus\{0\}$ with the interval $I_a=\left\lbrace e^{i\theta}:|\arg(ae^{-i\theta})|\leq \frac{1-|a|}{2} \right\rbrace$, and denote $S(a)=S(I_a)$. The space $C^\alpha_0(\omega^*)$ consists of $g\in H(\mathbb D)$ such that $$ \limsup_{|I| \to 0}\frac{\int_{S(I)}|g'(z)|^2\omega^*(z)dA(z)}{(\omega(S(I)))^\alpha}=0.$$

Throughout the paper the notation $A\lesssim B$ indicates that there is a constant $c$ independent of said or implied variables or functions such that $A\leq cB$. If $A\lesssim B$ and $B\lesssim A$, we write $A\simeq B$ and say that $A$ and $B$ are equivalent quantities.

The next result is a generalization of a part of Theorem 4.1 in \cite{PR} for the weights in the class $\widehat{D}$.
\begin{thma}\label{Tbound}
Let $0<p\leq q<\infty$, $\alpha = 2(\frac{1}{p}-\frac{1}{q})+1$, $\frac{1}{p}-\frac{1}{q}<1$, $\omega\in \widehat{D}$ and $g\in H(\mathbb D)$.
 Then $T_g:A_\omega^p \to A_\omega^q$ is bounded if and only if $g\in \mathcal C^\alpha(\omega^*)$.
\end{thma}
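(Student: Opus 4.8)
The plan is to derive both implications from the equivalent norm for $A_\omega^q$, $\omega\in\widehat D$, recorded in \cite[Chapter~4]{PR}, which expresses $\|h\|_{A_\omega^q}$ in terms of $h(0)$ and a Lusin area (tent space) integral of $h'$. Using this in place of a Littlewood--Paley identity is exactly what makes the argument work for $q\neq2$, which is the only genuinely delicate range.

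\textbf{Sufficiency.} Assume $g\in\mathcal C^\alpha(\omega^*)$. Since $(T_gf)(0)=0$ and $(T_gf)'=fg'$, the equivalent norm turns $\|T_gf\|_{A_\omega^q}^q$ into an expression of the shape
$$
\int_{\mathbb D}\Bigl(\int_{\Gamma(u)}|f(z)|^2|g'(z)|^2\,d\lambda(z)\Bigr)^{q/2}d\nu_\omega(u),
$$
with $\lambda$ and $\nu_\omega$ the weights dictated by that norm. Read as a tent-space norm of the product $fg'$, what must be shown is that multiplication by $g'$ sends $A_\omega^p$ boundedly into this tent space. By the embedding theory for $A_\omega^p$ with $\omega\in\widehat D$ (Pel\'aez--R\"atty\"a), this is governed by a Carleson-type condition on the measure $d\mu_g(z)=|g'(z)|^2\omega^*(z)\,dA(z)$, and the relevant threshold is precisely $\mu_g(S(I))\lesssim\omega(S(I))^\alpha$ uniformly over arcs $I$, with $\alpha=2(\tfrac1p-\tfrac1q)+1$ — that is, exactly the hypothesis $g\in\mathcal C^\alpha(\omega^*)$, with resulting bound $\|T_gf\|_{A_\omega^q}\lesssim\|g\|_{*,\alpha,\omega}\|f\|_{A_\omega^p}$. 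The exponent bookkeeping is forced by the case $q=2$ (where $\alpha=2/p$), in which everything collapses to the identity $\|T_gf\|_{A_\omega^2}^2\simeq\int_{\mathbb D}|fg'|^2\omega^*\,dA$ combined with the $L^2(\mu_g)$-Carleson theorem for $A_\omega^p$.

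\textbf{Necessity.} Suppose $T_g\colon A_\omega^p\to A_\omega^q$ is bounded; since $|g(0)|<\infty$ automatically, it suffices to bound $\|g\|_{*,\alpha,\omega}$. Fix an arc $I$, take $a$ with $S(a)\simeq S(I)$, and test $T_g$ on the standard normalized functions $F_a\in A_\omega^p$ satisfying $\|F_a\|_{A_\omega^p}\simeq1$ and $|F_a(z)|^p\omega(S(a))\gtrsim1$ on a fixed fraction of $S(a)$. Applying the same area-integral norm to $T_gF_a$, restricting the outer integral to the subregion of $S(a)$ whose Stolz cones lie in $S(a)$ and see the set where $|F_a|$ is large, and collecting the powers of $\omega(S(a))$ (they combine correctly because $q/p+q/2-1=\alpha q/2$), one gets
$$
\|T_g\|^q\ \gtrsim\ \|T_gF_a\|_{A_\omega^q}^q\ \gtrsim\ \Bigl(\frac{\mu_g(S(a))}{\omega(S(a))^\alpha}\Bigr)^{q/2}.
$$
Taking the supremum over $I$ (the arcs $I_a$ exhaust all arcs up to bounded distortion) yields $\|g\|_{*,\alpha,\omega}\lesssim\|T_g\|$, hence $g\in\mathcal C^\alpha(\omega^*)$. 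For $p<q$ there is also the softer route of differentiating $(T_gf)'=fg'$ and invoking the pointwise growth estimate in $A_\omega^q$ together with $F_a$, but for $p=q$ the area-integral argument is essential (the pointwise bound on $g'$ diverges when integrated against $\omega^*$ over $S(I)$).

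\textbf{Main obstacle.} The substantive work is the sufficiency direction for $q\neq2$: one must transplant the correct Hardy-inherited equivalent norm onto $A_\omega^q$ and peel the integral off $T_g$ without a Littlewood--Paley formula, and then push the Carleson/tent-space embedding for $A_\omega^p$ through the resulting area integral so that the weight $\omega^*$ and the exponent $\alpha=2(\tfrac1p-\tfrac1q)+1$ emerge with precisely the right powers; the hypothesis $\tfrac1p-\tfrac1q<1$ enters here to keep $\alpha$ in the admissible range. Once the equivalent norm is in hand the necessity direction is comparatively routine, essentially a test-function computation paralleling the classical-weight case treated in \cite{R}.
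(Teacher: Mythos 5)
Your architecture matches the one the paper actually uses: the paper does not prove Theorem~\ref{Tbound} in isolation (it is quoted as the $\widehat D$-analogue of \cite[Theorem 4.1]{PR}), but its quantitative content is exactly Theorem~\ref{norm} combined with Lemma~\ref{Tgfa}, i.e.\ the area-integral norm \eqref{equiv2} plus the Carleson embedding of Lemma~\ref{eCarle} for sufficiency, and testing on $f_{a,p}$ for necessity. However, two places in your sketch are asserted where the real work lies. First, in the sufficiency direction, ``multiplication by $g'$ maps $A_\omega^p$ into the tent space, governed by the Carleson condition on $d\mu_g=|g'|^2\omega^*dA$'' \emph{is} the theorem, not a citation: the mechanism in the paper is a H\"older splitting of $\|T_gf\|_{A_\omega^q}^q\simeq\int|T_gf|^{q-2}|f|^2|g'|^2\omega^*dA$ into two Carleson embeddings with exponents $\tfrac{2q-2p+pq}{p}$ and $\tfrac{2q-2p+pq}{q}$ (followed by dividing out $\|T_gf\|^{q-2}$) when $q\ge2$, and an interpolation against the non-tangential maximal function $N(f)$ via Lemma~\ref{N(f)} when $q<2$. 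You correctly flag this as the main obstacle, but you do not supply the splitting, so the sufficiency half remains a roadmap rather than a proof.

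The more serious gap is in your necessity argument when $q<2$ (in particular $p=q<2$, precisely the range where you say the area-integral route is ``essential''). Your step ``restrict the outer integral to the subregion of $S(a)$ and collect the powers'' implicitly uses Jensen's inequality for $t\mapsto t^{q/2}$ to pull the inner integral out of the power; for $q\ge2$ this function is convex and the inequality goes your way (and the exponent identity $q/p+q/2-1=\alpha q/2$ does close the computation), but for $q<2$ it is concave and the inequality reverses, so the direct restriction argument yields an \emph{upper} bound, not the lower bound you claim. The paper's Sub-case~3 of Lemma~\ref{Tgfa} handles this with a genuinely different device: a reverse-H\"older factorization producing the sweep operator $S_g(\chi_{S(a)})$, a duality estimate for $\|S_g(\chi_{S(a)})\|_{L_\omega^{\beta'/\alpha'}}$ via the maximal function $M_\omega$ and Lemma~\ref{eCarle}, and then a self-referential inequality \eqref{necep1} of the form $\tfrac{\mu_g(S(a))}{\omega(S(a))}\lesssim\|T_gf_{a,p}\|^p\bigl(\sup_{b:S(b)\subseteq S(a)}\tfrac{\mu_g(S(b))}{\omega(S(b))}\bigr)^{1-p/2}$, which must be bootstrapped (taking suprema and dividing, which additionally requires an a priori finiteness argument, e.g.\ via dilations $g_r$) to reach $\|g\|_{*,\alpha,\omega}\lesssim\|T_g\|$. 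None of this is visible in your sketch, and without it the case $q<2$ of necessity does not go through as written.
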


Below are our main results. The first result is a quantitative extension of Theorem \ref{Tbound}.

\begin{theorem}\label{norm}
Let $0<p\leq q <\infty$, $\omega\in \widehat{D}$, $\alpha = 2(\frac{1}{p}-\frac{1}{q})+1$ and $g\in C^\alpha(\omega^*)$. Then there exists $\eta=\eta(\omega)>1$ large enough such that the following quantities are comparable:
\begin{align*}
& \|T_g\|_{A_\omega^p\to A_\omega^q};\\
& \|g \|_{*,\alpha,\omega}=\sup_{I\subseteq \mathbb T}\left(\frac{\int_{S(I)}|g'(z)|^2\omega^*(z)dA(z)}{\omega(S(I))^\alpha}\right)^{1/2};\\
& B=\sup_{a\in \mathbb D}\int_{\mathbb D}\left(\frac{1}{\omega(S(a))}\left(\frac{1-|a|}{|1-\bar{a}z|}\right)^\eta\right)^{\alpha}|g'(z)|^2\omega^*(z)dA(z);\\
& C=\sup_{z \in \mathbb D}|g'(z)|(1-|z|)\omega^*(z)^{\frac{1}{q}-\frac{1}{p}}, p < q.
\end{align*}
Constants of comparison are independent of $g$.
\end{theorem}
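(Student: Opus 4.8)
The plan is to establish the stated comparisons through three essentially independent steps: (a) $B^{1/2}\simeq\|g\|_{*,\alpha,\omega}$; (b) if $p<q$, then $C\simeq\|g\|_{*,\alpha,\omega}$; and (c) $\|T_g\|_{A_\omega^p\to A_\omega^q}\simeq\|g\|_{*,\alpha,\omega}$; these three comparisons give the theorem (Theorem~\ref{Tbound} being the qualitative counterpart). Two elementary consequences of $\omega\in\widehat{D}$ will be used throughout: first, $\omega^*(z)\simeq\omega(S(z))$ for all $z\in\mathbb D$; second, $\omega(S(I))$ depends up to constants only on $|I|$, and doubling the length of an arc multiplies $\omega(S(I))$ by at most $2c_\omega$, where $c_\omega$ is the doubling constant of $\omega$, so that $\omega(S(J))\lesssim(2c_\omega)^n\,\omega(S(I))$ whenever $|J|\le 2^n|I|$. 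Write $d\mu_g(z)=|g'(z)|^2\omega^*(z)\,dA(z)$; thus $\|g\|_{*,\alpha,\omega}^2=\sup_I\mu_g(S(I))/\omega(S(I))^\alpha$, and the integrand of $B$ is $\big(\omega(S(a))^{-1}\,((1-|a|)/|1-\bar az|)^{\eta}\big)^{\alpha}$ against $d\mu_g$.

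For (a), the inequality $B\gtrsim\|g\|_{*,\alpha,\omega}^2$ is immediate: given an arc $I$, choose $a$ with $S(a)\simeq S(I)$ and $\omega(S(a))\simeq\omega(S(I))$, restrict the integral defining $B$ to $S(a)$, and note $(1-|a|)/|1-\bar az|\simeq1$ on $S(a)$. For the converse I would split $\mathbb D=\bigcup_{n\ge0}R_n(a)$ with $R_n(a)=\{z\in\mathbb D:2^n(1-|a|)\le|1-\bar az|<2^{n+1}(1-|a|)\}$. On $R_n(a)$ one has $(1-|a|)/|1-\bar az|\simeq2^{-n}$, and $R_n(a)$ can be covered by $\lesssim 2^n$ Carleson squares of side $\simeq2^n(1-|a|)$, each of $\mu_g$-measure at most $\|g\|_{*,\alpha,\omega}^2\,\omega(S(I))^\alpha$ with $|I|\simeq2^n(1-|a|)$, hence at most $\|g\|_{*,\alpha,\omega}^2\,(2c_\omega)^{n\alpha}\,\omega(S(a))^\alpha$ by the second fact above. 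Summing over $n$ and dividing by $\omega(S(a))^\alpha$ gives a geometric series of ratio $\simeq 2^{\,1-\eta\alpha}(2c_\omega)^\alpha$, which is less than $1$ as soon as $\eta>\tfrac1\alpha+\log_2(2c_\omega)$; since $\alpha\ge1$ this holds for $\eta=\eta(\omega)$ large, and this is exactly where the hypothesis on $\eta$ enters. The resulting bound $B\lesssim\|g\|_{*,\alpha,\omega}^2$ is uniform in $a$.

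For (b), suppose $p<q$, so $\alpha>1$. To get $C\lesssim\|g\|_{*,\alpha,\omega}$, subharmonicity of $|g'|^2$ gives $|g'(a)|^2\lesssim(1-|a|)^{-2}\int_{\Delta(a)}|g'|^2\,dA$ for a fixed hyperbolic disc $\Delta(a)$ centered at $a$; since $\omega^*$ is comparable to $\omega^*(a)$ on $\Delta(a)$ and $\Delta(a)$ lies in a fixed dilate of $S(a)$, this is $\lesssim(1-|a|)^{-2}\omega^*(a)^{-1}\mu_g(S(a))\le(1-|a|)^{-2}\omega^*(a)^{-1}\|g\|_{*,\alpha,\omega}^2\,\omega(S(a))^\alpha$, which, using $\omega^*(a)\simeq\omega(S(a))$ and $\alpha-1=2(\tfrac1p-\tfrac1q)$, rearranges to $|g'(a)|^2(1-|a|)^2\omega^*(a)^{2(1/q-1/p)}\lesssim\|g\|_{*,\alpha,\omega}^2$; taking the supremum over $a$ yields $C\lesssim\|g\|_{*,\alpha,\omega}$. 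Conversely, inserting the pointwise estimate $|g'(z)|^2\le C^2(1-|z|)^{-2}\omega^*(z)^{2/p-2/q}$ into $\mu_g(S(I))$ and using $\omega^*(z)\simeq(1-|z|)\int_{|z|}^1\omega(s)\,ds$ together with $\int_{1-|I|}^1(1-r)^{\alpha-2}\,dr\simeq|I|^{\alpha-1}$ --- convergent precisely because $\alpha>1$ --- gives $\mu_g(S(I))\lesssim C^2\,\omega(S(I))^\alpha$, hence $\|g\|_{*,\alpha,\omega}\lesssim C$. Both halves fail at $p=q$, which is why $C$ is excluded there.

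Step (c) is the substantive one. I would use the equivalent norm for $A_\omega^q$ inherited from Hardy-space theory (\cite[Ch.~4]{PR}): for $h\in H(\mathbb D)$, the quantity $\|h\|_{A_\omega^q}^q$ is comparable to $|h(0)|^q$ plus a nontangential area integral of $h'$ weighted by $\omega^*$. Applied to $h=T_gf$, whose derivative is $fg'$ and which vanishes at $0$, this turns $\|T_gf\|_{A_\omega^q}$ into a mixed-norm expression in $fg'$, and the upper estimate $\|T_g\|_{A_\omega^p\to A_\omega^q}\lesssim\|g\|_{*,\alpha,\omega}$ reduces to a Carleson-type embedding: the box condition $\mu_g(S(I))\lesssim\|g\|_{*,\alpha,\omega}^2\,\omega(S(I))^\alpha$ with exponent $\alpha=2(\tfrac1p-\tfrac1q)+1$ suffices for the relevant embedding of $A_\omega^p$, one replacing $|f|^2$ by its average over a hyperbolic disc (subharmonicity) and invoking a tent-space/maximal-function estimate. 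For the lower bound I would test on the standard normalized functions $F_a\in A_\omega^p$ with $\|F_a\|_{A_\omega^p}\simeq1$ and $|F_a(z)|\simeq\omega(S(a))^{-1/p}\,\big((1-|a|)/|1-\bar az|\big)^{\gamma}$, choosing $\gamma$ so that the profile of $|F_a|^2$ matches the factor appearing in $B$; since $(T_gF_a)'=F_ag'$, substituting $F_a$ into the area-function norm and comparing with the $a$-th term in the supremum defining $B$ gives $\|T_g\|_{A_\omega^p\to A_\omega^q}\gtrsim B^{1/2}\simeq\|g\|_{*,\alpha,\omega}$ --- transparent for $q=2$ from the Littlewood--Paley identity $\|h\|_{A_\omega^2}^2\simeq|h(0)|^2+\int_{\mathbb D}|h'|^2\omega^*\,dA$, and for general $q$ requiring an analogous area-function computation. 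When $p<q$ there is the additional shortcut $\|T_g\|_{A_\omega^p\to A_\omega^q}\gtrsim C$: apply the pointwise derivative estimate $|h'(a)|\lesssim\|h\|_{A_\omega^q}\,\omega(S(a))^{-1/q}(1-|a|)^{-1}$ to $h=T_gF_a$ and use step~(b). I expect the Carleson/tent-space embedding with the precise exponent $\alpha$, carried out uniformly over the class $\widehat{D}$ --- in particular the bookkeeping relating $\int_{|z|}^1\omega$ across scales through the doubling property --- to be the main obstacle; everything else is comparatively routine once the Hardy-space equivalent norm and the two facts about $\widehat{D}$-weights above are in hand.
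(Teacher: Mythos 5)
Your steps (a) and (b) are correct and complete: the dyadic-annulus argument for $B\simeq\lVert g\rVert_{*,\alpha,\omega}^2$ (with $\eta$ large depending on the doubling constant) and the subharmonicity/direct-integration argument for $C\simeq\lVert g\rVert_{*,\alpha,\omega}$ when $\alpha>1$ are exactly the content of Lemma 5.3 and Proposition 4.7 of \cite{PR}, which the paper simply cites. Your shortcut $\lVert T_g\rVert\gtrsim C$ for $p<q$ via the pointwise derivative estimate applied to $T_gF_a$ is also sound and is what the paper does.

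The genuine gap is in step (c), which is the substance of the theorem, and your own closing sentence concedes it. For the upper bound, the assertion that the box condition $\mu_g(S(I))\lesssim\lVert g\rVert_{*,\alpha,\omega}^2\,\omega(S(I))^\alpha$ ``suffices for the relevant embedding'' via ``a tent-space/maximal-function estimate'' is precisely the statement that has to be proved, and it does not follow from the square-function norm in one stroke. The paper has to split at $q=2$: for $q\ge 2$ it avoids the tent-space expression entirely, using the exact identity $\lVert T_gf\rVert_{A_\omega^q}^q\simeq\int_{\mathbb D}|T_gf|^{q-2}|f|^2\,d\mu_g$, a H\"older splitting into the two quantities $\mathsf U,\mathsf V$ with exponents built from $\tfrac{2q-2p+pq}{p}$ and $\tfrac{2q-2p+pq}{q}$, two applications of the Carleson embedding (Lemma \ref{eCarle}) for $A_\omega^q$ and $A_\omega^p$ respectively, and a cancellation of $\lVert T_gf\rVert^{q-2}$; for $q<2$ it uses the square-function norm but must first insert the non-tangential maximal function $N(f)^{p(2-q)/2}$ and apply H\"older before the Carleson embedding becomes applicable. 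Neither reduction is supplied in your sketch. For the lower bound, the only unresolved case after your $C$-shortcut is $p=q\ne 2$, and there the claim that an ``analogous area-function computation'' handles it is where the real work lies: for $p=q<2$ the paper needs a duality argument for the sublinear operator $S_g(\psi)(u)=\int_{\Gamma(u)}|\psi|^2|g'|^2\,dA$, the maximal-function Carleson embedding, and a self-improving inequality in which the Carleson constant appears to the power $1-\tfrac p2$ (inequality \eqref{necep1}); for $p=q>2$ it needs Fubini, H\"older against $N(f_{a,p})^{p-2}$, and Lemma \ref{N(f)}. As written, your step (c) is a plan rather than a proof, and the plan as stated (a single tent-space reduction valid for all $q$) would not go through without the case analysis and the additional H\"older/maximal-function machinery.
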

\begin{theorem}\label{essnorm}
Let $0< p\leq q < \infty$, $\alpha = 2(\frac{1}{p}-\frac{1}{q})+1$, $\frac{1}{p}-\frac{1}{q}<1$, $\omega\in \widehat{D}$ and  $g\in \mathcal{C}^\alpha(\omega^*)$. Then there exists $\eta=\eta(\omega)>1$ large enough such that the following quantities are comparable: 
\begin{align*}
& \|T_g\|_{e,A_\omega^p\rightarrow A_\omega^q};\\
& A=\hbox{dist} (g,C_0^\alpha);\\
& B=\limsup_{|I|\to0}\left(\frac{\int_{S(I)}|g'(z)|^2\omega^*(z)dA(z)}{\omega(S(I))^\alpha}\right)^{1/2};\\
& C=\limsup_{|a|\to 1^-}\int_{\mathbb D}\left(\frac{1}{\omega(S(a))}\left(\frac{1-|a|}{|1-\bar{a}z|}\right)^\eta\right)^\alpha|g'(z)|^2\omega^*(z)dA(z);\\
& D=\limsup_{|z|\to 1^-}|g'(z)|(1-|z|)\omega^*(z)^{\frac{1}{q}-\frac{1}{p}}, p < q.\end{align*}
\end{theorem}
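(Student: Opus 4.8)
The plan is to establish the chain of comparisons $\|T_g\|_{e,A_\omega^p\to A_\omega^q} \simeq A \simeq B \simeq C \simeq D$ (the last only when $p<q$) by mirroring the structure of the proof of Theorem \ref{norm}, but passing everywhere from suprema over all Carleson boxes (or all points) to $\limsup$'s as $|I|\to 0$ (resp.\ $|a|\to 1$, $|z|\to 1$). The key technical device is a standard ``cut-off'' argument: for $0<\rho<1$ write $g = g_\rho + (g-g_\rho)$ in a way that splits $T_g$ into a piece supported near the origin and a piece supported near the boundary. Concretely, one approximates $g$ by functions analytic in a neighbourhood of $\overline{\mathbb D}$ (e.g.\ dilations $g_\rho(z)=g(\rho z)$ or partial Taylor sums); the operator $T_{g_\rho}$ is compact on $A_\omega^p\to A_\omega^q$ (this follows from Theorem \ref{Tbound} together with a normal-families/boundedness argument, since $g_\rho'$ is bounded and the embedding is then compact), so $\|T_g\|_{e}\le \|T_g - T_{g_\rho}\|=\|T_{g-g_\rho}\|$, and by Theorem \ref{norm} the latter is $\simeq \|g-g_\rho\|_{*,\alpha,\omega}$. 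Taking the infimum over all such compact-inducing symbols, and noting that $C_0^\alpha(\omega^*)$ is exactly the set of symbols that can be approximated in the $\|\cdot\|_{*,\alpha,\omega}$ seminorm by such nice functions, yields $\|T_g\|_{e}\lesssim A = \mathrm{dist}(g,C_0^\alpha)$.

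For the reverse inequality $\|T_g\|_{e} \gtrsim B$ (equivalently $\gtrsim A$, once $A\simeq B$ is known), the plan is the familiar test-function method. Fix a compact operator $K:A_\omega^p\to A_\omega^q$. Choose intervals $I_n$ with $|I_n|\to 0$ realizing the $\limsup$ in $B$, and associated points $a_n\in\mathbb D$ with $|a_n|\to 1$; take the normalized reproducing-kernel-type test functions $f_n = f_{a_n}$ used in the proof of Theorem \ref{norm} (powers of $(1-|a_n|)/(1-\bar a_n z)$, normalized in $A_\omega^p$). These form a bounded sequence converging to $0$ uniformly on compact subsets of $\mathbb D$, hence $\|Kf_n\|_{A_\omega^q}\to 0$. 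Consequently $\|T_g-K\|\gtrsim \limsup_n\|(T_g-K)f_n\|_{A_\omega^q}\gtrsim \limsup_n\|T_gf_n\|_{A_\omega^q}$, and the lower bound $\|T_gf_n\|_{A_\omega^q}\gtrsim (\int_{S(I_n)}|g'|^2\omega^*\,dA/\omega(S(I_n))^\alpha)^{1/2} + \text{(extra term when }p<q)$ from the norm estimate gives $\|T_g-K\|\gtrsim B$ (and $\gtrsim D$ when $p<q$); taking the infimum over $K$ finishes this direction. The equivalences $A\simeq B$, $B\simeq C$, $C\simeq D$ among the symbol-side quantities are then obtained by exactly the same computations as in Theorem \ref{norm} but localized near the boundary: $B\simeq C$ via the doubling property of $\omega\in\widehat D$ and the choice of $\eta$ large (splitting the integral defining $C$ into dyadic annuli around $S(a)$ and summing a geometric-type series), $B\simeq D$ (for $p<q$) via subharmonicity/mean-value estimates for $|g'|^2$ together with the known size of $\omega^*$ on Carleson boxes, and $A\simeq B$ because membership of the ``tail'' $g-g_\rho$ in $C_0^\alpha$ and smallness of its seminorm track the same $\limsup$.

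The main obstacle, as the authors flag in the introduction, is the absence of a Littlewood--Paley identity for $A_\omega^p$ when $p\neq 2$: one cannot simply rewrite $\|T_gf\|_{A_\omega^q}^q$ as an integral of $|f|^q|g'|^2\omega^*$. The workaround is to use the equivalent norms for $A_\omega^q$ coming from Hardy-space theory (cf.\ \cite[Chapter 4]{PR}) — expressing $\|T_gf\|_{A_\omega^q}$ via square-function/maximal-function quantities built from $(T_gf)' = f g'$ — so that the Carleson-box quantity $\int_{S(I)}|g'|^2\omega^*\,dA$ emerges naturally, and then transferring the $\limsup$ through these equivalences uniformly. Making the test-function lower bound and the compactness-of-$T_{g_\rho}$ step work simultaneously for all $0<p\le q<\infty$ (including $p=q$ and $p<1$, where $A_\omega^p$ is only a quasi-Banach space and ``weak compactness'' is not at issue but the triangle inequality must be handled with the $p$-triangle inequality) is where the care lies; conceptually, though, everything reduces to the boundedness statement Theorem \ref{norm} plus a routine localization.
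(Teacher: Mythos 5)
Your overall architecture matches the paper's: a lower bound via the test functions $f_{a,p}$ annihilated by compact operators, an upper bound of the form $\|T_g\|_{e}\le\|T_{g-h}\|$ for a compact $T_h$, and the symbol-side equivalences $A\simeq B\simeq C\simeq D$ obtained by localizing the norm-level identities. However, your upper-bound step contains a genuine error. You claim that $T_{g_\rho}$ is compact because $g_\rho'$ is bounded, and that $C_0^\alpha(\omega^*)$ is exactly the seminorm-closure of such dilated (or otherwise ``nice'') symbols. Both claims fail when $\alpha>1$ and $\liminf_{r\to1^-}(\omega^*(r))^{(\alpha-1)/2}/(1-r)\in\,]0,\infty[$, which is case (ii) of Lemma \ref{liminf_lemma}: there $C_0^\alpha(\omega^*)$ collapses to the constants while $C^\alpha(\omega^*)=\{f: f'\in H^\infty\}$, so a nonconstant dilation $g_\rho$ does \emph{not} induce a compact operator. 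Concretely, take $\omega\equiv 1$, $p=1$, $q=2$, so $\alpha=2$ and $\omega^*(r)\simeq(1-r)^2$; then $g(z)=z$ lies in $C^2(\omega^*)$, $g_\rho=\rho g$, $T_{g_\rho}=\rho T_g$, and your argument would give $\|T_g\|_{e}\le\|T_{(1-\rho)g}\|=(1-\rho)\|T_g\|\to 0$, i.e.\ $T_g$ compact --- contradicting $B=\limsup_{|I|\to0}\bigl(\int_{S(I)}\omega^*\,dA/\omega(S(I))^2\bigr)^{1/2}\simeq 1>0$. This is not an exotic configuration: it is the classical unweighted Bergman scale at the critical index $\frac1p-\frac1q=\frac12$. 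The paper avoids the issue by subtracting $T_h$ for an \emph{arbitrary} $h\in C_0^\alpha(\omega^*)$ (invoking the compactness half of the Pel\'aez--R\"atty\"a characterization, so the infimum is literally $\mathrm{dist}(g,C_0^\alpha)$) and by proving $A\simeq B$ through a separate three-case analysis (Lemma \ref{distance}); your dilation argument only covers the case $\alpha=1$ or $\beta=\infty$, which is Case 3 of that lemma.

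A secondary, less serious point: for the lower bound you assert the pointwise estimate $\|T_gf_{a,p}\|_{A_\omega^q}\gtrsim\bigl(\int_{S(a)}|g'|^2\omega^*\,dA/\omega(S(a))^\alpha\bigr)^{1/2}$ for each $a$. The paper only obtains this directly for $p\ge2$; for $0<p<2$ the argument (Lemma \ref{Tgfa}, Sub-case 3) yields the weaker inequality carrying an extra factor $\bigl(\sup_{b:S(b)\subseteq S(a)}\mu_a(S(b))/\omega(S(b))\bigr)^{1-p/2}$, and one must pass to the $\limsup$ via the self-improvement step \eqref{eq: limsupestimate}. Since you take $\limsup$'s anyway this does not derail the plan, but the case $p<2$ needs that extra device rather than a direct appeal to ``the norm estimate''.
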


\begin{theorem}\label{theorem2}
Let $\omega\in \widehat{D}$ and  $g\in \mathcal{C}^1(\omega^*)$. Then
\begin{eqnarray*}
&&\|T_g\|_{w,A_\omega^1\rightarrow A_\omega^1}\simeq \hbox{dist} (g,C_0^1(\omega^*))\\
&&\simeq \limsup_{|I|\to 0}\left(\frac{\int_{S(I)}|g'(z)|^2\omega^*(z)dA(z)}{\omega(S(I))}\right)^{1/2}
\simeq \|T_g\|_{e,A_\omega^1\rightarrow A_\omega^1}.
\end{eqnarray*}
In particular, the operator $T_g$ is weakly compact on $A^1_\omega$ if and only if it is compact.
\end{theorem}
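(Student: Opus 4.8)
The plan is to prove Theorem \ref{theorem2} by establishing the chain of comparisons, reducing the new content to the single inequality
$$\|T_g\|_{e,A_\omega^1\to A_\omega^1}\lesssim \|T_g\|_{w,A_\omega^1\to A_\omega^1},$$
since the reverse inequality $\|T_g\|_{w}\le\|T_g\|_{e}$ is trivial (every compact operator is weakly compact), and the identifications of $\|T_g\|_{e,A_\omega^1\to A_\omega^1}$ with $\operatorname{dist}(g,C_0^1(\omega^*))$ and with the $\limsup$ of the Carleson-box averages are exactly the content of Theorem \ref{essnorm} specialized to $p=q=1$, $\alpha=1$. Thus everything hinges on showing that weak non-compactness of $T_g$ forces the Carleson-box quantity $B$ to be bounded away from zero, with a constant multiple controlled by $\|T_g\|_w$.

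The first step is to set up a test-function scheme adapted to $\widehat D$. For each $a\in\mathbb D$ I would take normalized reproducing-kernel-type functions $f_a$ (or the standard $(1-|a|)$-dilates used in the Bergman theory for doubling weights), normalized so that $\|f_a\|_{A_\omega^1}\simeq 1$, with $f_a\to 0$ uniformly on compact subsets of $\mathbb D$ as $|a|\to 1$, and such that the mass of $|f_a|$ concentrates on $S(a)$. A computation analogous to the one used for Theorem \ref{Tbound} and Theorem \ref{essnorm} then gives
$$\|T_g f_a\|_{A_\omega^1}\gtrsim \left(\frac{\int_{S(a)}|g'(z)|^2\omega^*(z)\,dA(z)}{\omega(S(a))}\right)^{1/2},$$
using the Hardy-space-type equivalent norm from \cite[Chapter 4]{PR} to handle the integral in $T_g$, exactly as the authors indicate in the introduction. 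So if $B>0$ there is a sequence $a_n$ with $|a_n|\to 1$ along which $\|T_g f_{a_n}\|_{A_\omega^1}$ stays bounded below.

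The second and main step is the weak-compactness obstruction. The sequence $(f_{a_n})$ is bounded in $A_\omega^1$ and converges to $0$ uniformly on compacta, hence weakly in $A_\omega^1$ it cannot have a norm-convergent-to-a-nonzero subsequence; more usefully, by passing to a subsequence one can arrange $(f_{a_n})$ to be equivalent to the unit vector basis of $\ell^1$ (a gliding-hump / disjointness-of-supports argument, since the functions are asymptotically supported on the disjoint Carleson boxes $S(a_n)$). If $T_g$ were weakly compact, then $(T_g f_{a_n})$ would have a weakly convergent subsequence in $A_\omega^1$; combined with $T_g f_{a_n}\to 0$ uniformly on compacta (so any weak limit is $0$) and with an $\ell^1$-type lower estimate $\|\sum c_k T_g f_{a_{n_k}}\|_{A_\omega^1}\gtrsim \delta\sum|c_k|$, one contradicts the fact that weakly compact sets in $A_\omega^1\subset L^1$ are equi-integrable (Dunford--Pettis): the $T_g f_{a_n}$ would behave like disjointly supported $L^1$-functions of norm bounded below, which cannot lie in a weakly compact set. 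Quantitatively, one extracts from $\|T_g\|_w$ an upper bound for $\delta$, giving $B\lesssim \|T_g\|_w$. The main obstacle is precisely making this Dunford--Pettis / $\ell^1$-copy argument quantitative and compatible with the doubling-weight machinery — i.e.\ verifying that the $T_g f_{a_n}$ really do form an $\ell^1$-like system with constants depending only on $\omega$, which requires the disjoint-support estimates for $A_\omega^1$ functions and the equivalent norm of \cite[Chapter 4]{PR} rather than a Littlewood--Paley identity. Once this is in place, chaining with Theorem \ref{essnorm} yields $\|T_g\|_e\lesssim\|T_g\|_w\le\|T_g\|_e$, so all four quantities are comparable and, in particular, $T_g$ is weakly compact on $A_\omega^1$ exactly when it is compact.
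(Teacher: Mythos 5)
Your overall strategy is the right one and matches the paper's in spirit: reduce everything to the single inequality $\|T_g\|_{w,A^1_\omega\to A^1_\omega}\gtrsim B$ (the other comparisons being Theorem \ref{essnorm} with $p=q=1$), test with the normalized kernels $f_a=f_{a,1}$, use Lemma \ref{Tgfa} to get $\limsup_{|a|\to1}\|T_gf_a\|_{A^1_\omega}\gtrsim B$, and invoke the Dunford--Pettis equi-integrability criterion as the weak-compactness obstruction. However, there is a genuine gap in how you close the argument. Your main step is qualitative: you argue that \emph{if $T_g$ were weakly compact} then $(T_gf_{a_n})$ would violate equi-integrability, which at best shows that $B>0$ implies $T_g$ is not weakly compact. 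The theorem asserts the quantitative bound $\|T_g-W\|\gtrsim B$ for \emph{every} weakly compact $W$, and your sentence ``one extracts from $\|T_g\|_w$ an upper bound for $\delta$'' is not an argument. The correct quantitative route (the one the paper takes) is to fix an arbitrary weakly compact $W$, note that $\{Wf_a\}$ is relatively weakly compact and hence equi-integrable, so $\int_{J(a)}|Wf_a|\,\omega\,dA\to0$ over any family of sets $J(a)$ with $\omega(J(a))\to0$, and to play this against the fact that $T_gf_a$ carries essentially all of its $L^1_\omega$-mass on such a set $J(a)$; then $\|T_g-W\|\gtrsim\int_{J(a)}|T_gf_a|\,\omega\,dA-\int_{J(a)}|Wf_a|\,\omega\,dA$ gives the bound after taking $\limsup$ and the infimum over $W$. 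This sidesteps the $\ell^1$-basis construction entirely, which in your sketch is a detour that does not by itself yield a norm estimate.

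The second, related gap is that the localization statement you need --- that $\int_{\mathbb D\setminus J(a)}|T_gf_a|\,\omega\,dA\to0$ as $|a|\to1$ --- is a nontrivial computation that you assert rather than prove (it is Lemma \ref{integrallimit} in the paper, proved via the pointwise decay of $f_a$ off $J(a)$ together with the square-function norm \eqref{equiv2}). Note also that the concentration set cannot be taken to be the Carleson box $S(a)$ as you suggest: since $f_a$ decays only polynomially away from $a$, one must enlarge to a region such as $J(a)=\{re^{i\theta}:|\theta-\arg a|<(1-|a|)^{1/6},\,1-|a|<r<1\}$ (still of vanishing $\omega$-measure) and choose the exponent $\gamma$ in $f_a$ large enough relative to $\widehat\omega$. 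Without this lemma, neither your disjoint-support/$\ell^1$ claim for $(T_gf_{a_n})$ nor the equi-integrability contradiction can be made rigorous.
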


The paper is organized as follows. In section 2, we give some preliminary results. In section 3, the proofs of norm estimates  are presented. In section 4, we investigate essential norms between two weighted Bergman spaces and weak compactness on $A_\omega^1$.

\section{Preliminaries}
An integrable function  $\omega: \mathbb D\to (0,\infty)$ is called a \textit{weight  function} or simply a \textit{weight}. For $0<p<\infty$ and a weight $\omega$, the \textit{weighted Bergman space} $A_\omega^p$  stands for the space of all  functions $f\in H(\mathbb D)$ satisfying
$$\|f\|_{A_\omega^p}^p=\int_{\mathbb D}|f(z)|^p\omega(z)dA(z)<\infty,$$
where $dA(z)=\frac{1}{\pi}dxdy$ is the normalized Lebesgue area measure on $\mathbb D$. For $\omega(z)=(1-|z|^2)^\alpha$, $-1<\alpha<\infty$, $A_\omega^p$ is the classical weighted Bergman space. If $1\leq p<\infty$, then $\|\cdot\|_{A_\omega^p}$ is a norm which makes $A_\omega^p$ a Banach space.
But if $0<p<1$, then it is instead $\|\cdot\|_{A_\omega^p}^p$ which is subadditive and used to induce the complete translation invariant metric. The operator norm is defined as usual $$\|T_g\|_{A_\omega^p\rightarrow A_\omega^q}=\sup_{\|f\|_{A_\omega^p}\leq 1}\|T_gf\|_{A_\omega^q},$$ although in the case $0 < q < 1$ the quantity $\|\cdot\|_{A_\omega^p\rightarrow A_\omega^q}$ is a quasi-norm, but we make no distinction between that and the operator norm.

A weight $\omega$ is radial if $\omega(z)=\omega(|z|)$ for all $z\in \mathbb D$. Let $\widehat{D}$ be the class of radial weights such that $\widehat{\omega}(r)=\int_r^1\omega(s)ds$ satisfies  the doubling property, that is, there exists $C=C(\omega)$ such that  $$\widehat{\omega}(r)\leq C\widehat{\omega}\left(\frac{1+r}{2}\right),\quad \mbox{for}\  \forall\  0\leq r<1.$$
A radial weight $\omega$ is called \textsl{regular} if $\omega$ is continuous and satisfies $$\frac{\widehat{\omega}(r)}{\omega(r)}\simeq 1-r, \quad \mbox{for } 0\leq r<1.$$ The weight $\omega^*$ is regular if $\omega\in \widehat{D}$. The class of regular weights is denoted by $\mathcal{R}$. Also, a radial weight $\omega$ is in the class of rapidly increasing weights $\mathcal{I}$ if it is continuous and satisfies $$\lim_{r \to 1^-} \frac{\widehat{\omega}(r)}{(1-r)\omega(r)} = \infty.$$ See \cite{PR} for more information on classes $\mathcal{I}$ and $\mathcal{R}$.

Recall that non-tangential regions and the tents are defined by
$$\Gamma(u)=\left\{z\in \mathbb D: |\theta-\arg z|<\frac{1}{2}\left(1-\frac{|z|}{r}\right)\right\},
\quad u=re^{i\theta}\in \overline{\mathbb D}\setminus\{0\},$$$$T(z)=\{u\in \mathbb D:z\in \Gamma(u)\},\quad z\in \mathbb D.$$
A simple computation shows that $\omega(S(z))\simeq\omega(T(z))\simeq\omega^*(z)$, as $|z|\to 1^-$, provided $\omega\in \widehat{D}$.
The maximal function related to the measure $\omega(\cdot)dA$ is defined by
$$M_\omega(\psi)(z)=\sup_{I: z\in S(I)}\frac{1}{\omega(S(I))}\int_{S(I)}|\psi(\xi)|\omega(\xi)dA(\xi),\quad z\in \mathbb D,$$
 where $\psi\in L_\omega^1$. For more information on $A_\omega^p$, see \cite{PR2,PR1,PR}.

Recall that for a given Banach space (or a complete metric space) $X$ of analytic functions on $\mathbb{D}$, a positive Borel measure $\mu$ on $\mathbb{D}$ is called a q-Carleson measure for $X$ if the identity operator $I : X \to L^q(\mu)$ is bounded. Pel\'{a}ez and R\"{a}tty\"{a} \cite{PR} investigated the $q$-Carleson measure for $A_\omega^p$, as well as the boundedness  and compactness of the integral operator $T_g$, where $\omega \in \mathcal I \cup ~ \mathcal R$. The classes $\mathcal I$ and $\mathcal R$ are contained in $\widehat{D}$. In fact $\widehat{D}$ preserves almost all the properties of $\mathcal I \cup \mathcal R$ and so those statements concerning the Carleson measures and  the integral operators are also true on $A_\omega^p$, $\omega\in \widehat{D}$. For the reader's convenience, we list some results here and skip proofs. The next lemma is essentially Theorem 2.1 and Corollary 2.2 in \cite{PR}.

\begin{lemma}\label{eCarle}
Let $0<p \leq q <\infty$ and $\omega\in \widehat{D}$, and let $\mu$ be a positive Borel measure on $\mathbb D$. Then $\mu$ is a $q$-Carleson measure  for $A_\omega^p$ if and only if \begin{equation}\label{Carlecon}G\triangleq\sup_{I\subseteq \mathbb T}\frac{\mu(S(I))}{(\omega(S(I)))^{\frac{q}{p}}}<\infty.\end{equation} Moreover, if $\mu$ is a $q$-Carleson measure for $A_\omega^p$, then for all $f\in A_\omega^p$
\begin{equation}\label{eCarnorm}
\|f\|_{A_\mu^q}^q\lesssim G \|f\|_{A_\omega^p}^q.
\end{equation} Furthermore, if  $\alpha\in (0,\infty)$ such that $p\alpha>1$, then $ [M_\omega((\cdot)^{\frac{1}{\alpha}})]^\alpha: L_\omega^p\rightarrow L_\mu^q$ is bounded if and only if $\mu$ satisfies (\ref{Carlecon}) and $\|[M_\omega((\cdot)^{\frac{1}{\alpha}})]^\alpha\|_{L_\omega^p\rightarrow L_\mu^q}^q\simeq G$.

\end{lemma}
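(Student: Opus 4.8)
The plan is to let the maximal-function statement carry the analytic weight and to deduce the embedding \eqref{Carlecon}--\eqref{eCarnorm} from it. Fix any $\alpha>1/p$, so that $p\alpha>1$, and record the subharmonicity bound that for every $f\in H(\mathbb D)$ one has $|f(z)|\lesssim\big[M_\omega(|f|^{1/\alpha})(z)\big]^{\alpha}$ for all $z\in\mathbb D$: indeed $|f|^{1/\alpha}$ is subharmonic, so its $\omega$-average over a fixed pseudohyperbolic disc $\Delta(z,\rho)$ dominates $|f(z)|^{1/\alpha}$, and the doubling property $\omega\in\widehat D$ lets one pass from that average to an average over a Carleson square containing $z$, which is at most $M_\omega(|f|^{1/\alpha})(z)$. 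Granting the third assertion, I apply $[M_\omega((\cdot)^{1/\alpha})]^{\alpha}$ to $\phi=|f|\in L_\omega^p$, for which $\|\phi\|_{L_\omega^p}=\|f\|_{A_\omega^p}$, and obtain $\|f\|_{A_\mu^q}^q\le\|[M_\omega((\cdot)^{1/\alpha})]^{\alpha}\phi\|_{L_\mu^q}^q\lesssim G\,\|f\|_{A_\omega^p}^q$. This is exactly \eqref{eCarnorm} and gives the sufficiency direction of \eqref{Carlecon}.

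For the necessity of \eqref{Carlecon} and the lower bound for the maximal operator I test on the standard family. For the embedding I use $f_a(z)=\omega(S(a))^{-1/p}\big((1-|a|)/(1-\bar a z)\big)^{\eta}$ with $\eta=\eta(\omega)$ large; the moment estimates for $\widehat D$-weights give $\|f_a\|_{A_\omega^p}\simeq1$ while $|f_a|\gtrsim\omega(S(a))^{-1/p}$ on $S(a)$, so boundedness of $\mathrm{Id}:A_\omega^p\to L^q(\mu)$ forces $\mu(S(a))/\omega(S(a))^{q/p}\lesssim\|\mathrm{Id}\|^q$, whence $G<\infty$ after passing from points $a$ to arcs $I$ via $S(I)=S(a_I)$. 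For the maximal operator I test on $\phi=\chi_{S(a)}$: then $[M_\omega((\cdot)^{1/\alpha})]^{\alpha}\phi\ge1$ on $S(a)$ and $\|\phi\|_{L_\omega^p}^p=\omega(S(a))$, so $G\le\|[M_\omega((\cdot)^{1/\alpha})]^{\alpha}\|^q$. Combined with the sufficiency this yields the comparability $\|[M_\omega((\cdot)^{1/\alpha})]^{\alpha}\|^q\simeq G$.

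The substantial step is the sufficiency in the third assertion: assuming $G<\infty$, show $[M_\omega((\cdot)^{1/\alpha})]^{\alpha}:L_\omega^p\to L_\mu^q$ is bounded with norm$^q\lesssim G$. Setting $u=\phi^{1/\alpha}$, $r=\alpha p$, $s=\alpha q$, this is precisely $M_\omega:L_\omega^r\to L_\mu^s$ with $1<r\le s$ and $s/r=q/p$. I would run a Calderón--Zygmund decomposition of the level sets $\Omega_k=\{M_\omega u>2^k\}$ adapted to Carleson squares: the boxes witnessing $M_\omega u>2^k$ cover $\Omega_k$, and a Vitali-type selection together with doubling extracts an essentially disjoint subfamily $\{S_j\}$ with $\sum_j\omega(S_j)\lesssim\omega(\Omega_k)$. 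Then $\mu(\Omega_k)\le\sum_j\mu(S_j)\le G\sum_j\omega(S_j)^{s/r}\le G\big(\sum_j\omega(S_j)\big)^{s/r}\lesssim G\,\omega(\Omega_k)^{s/r}$, the middle step being the elementary $\ell^1\hookrightarrow\ell^{s/r}$ inequality, valid since $s/r\ge1$. With $b_k=2^{kr}\omega(\Omega_k)$ the layer-cake series gives $\int(M_\omega u)^s\,d\mu\lesssim\sum_k2^{ks}\mu(\Omega_k)\lesssim G\sum_k b_k^{s/r}\le G\big(\sum_k b_k\big)^{s/r}\simeq G\,\|M_\omega u\|_{L_\omega^r}^s\lesssim G\,\|u\|_{L_\omega^r}^s$, the final inequality using the weak-$(1,1)$ bound for $M_\omega$ and Marcinkiewicz interpolation to get $M_\omega$ bounded on $L_\omega^r$ for $r>1$. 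The main obstacle is exactly this maximal-function machinery over $\widehat D$: proving the weak-$(1,1)$ inequality and the covering estimate $\sum_j\omega(S_j)\lesssim\omega(\Omega_k)$ for Carleson squares from the doubling property alone, so that the scheme of \cite{PR} (established there for $\omega\in\mathcal I\cup\mathcal R$) carries over verbatim to the larger class $\widehat D$; once this geometric input is secured, the two applications of $\ell^1\hookrightarrow\ell^{s/r}$ dispose of the gap $p<q$ with no extra work.
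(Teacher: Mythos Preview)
The paper does not supply its own proof of this lemma: it is stated, without argument, as the extension to the class $\widehat D$ of Theorem~2.1 and Corollary~2.2 of Pel\'aez--R\"atty\"a \cite{PR}; the authors explicitly say they ``list some results here and skip proofs.'' Your outline is in fact the Pel\'aez--R\"atty\"a scheme: first establish the $L^p_\omega\to L^q_\mu$ bound for $[M_\omega((\cdot)^{1/\alpha})]^\alpha$ via a level-set decomposition, a Vitali-type selection among Carleson squares, and the strong $L^r_\omega$-boundedness of $M_\omega$ for $r>1$; then deduce the analytic embedding $A^p_\omega\hookrightarrow L^q_\mu$ from a pointwise domination $|f|\lesssim[M_\omega(|f|^{1/\alpha})]^{\alpha}$; and obtain the necessary conditions by testing on the kernels $f_{a,p}$ and on $\chi_{S(a)}$. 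So there is nothing to compare against except \cite{PR,PR2}, and your plan matches that.

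One step, however, is written incorrectly and would fail for a general $\omega\in\widehat D$. You assert that ``$|f|^{1/\alpha}$ is subharmonic, so its $\omega$-average over a fixed pseudohyperbolic disc $\Delta(z,\rho)$ dominates $|f(z)|^{1/\alpha}$.'' Subharmonicity yields only the \emph{unweighted} sub-mean-value inequality; to replace $dA$ by $\omega\,dA$ on $\Delta(z,\rho)$ you would need $\omega$ to be essentially constant there, and for $\omega\in\widehat D$ the pointwise behaviour of $\omega$ is completely uncontrolled---only $\widehat\omega$ is doubling. The inequality $|f(z)|^{1/\alpha}\lesssim M_\omega(|f|^{1/\alpha})(z)$ does hold for analytic $f$ and radial $\omega\in\widehat D$, but the proof must use radiality together with the doubling of $\widehat\omega$ rather than a bare sub-mean-value estimate on a single disc: one integrates the subharmonic (Poisson) majorization on circles $|w|=\rho$ against $\omega(\rho)\rho\,d\rho$ over a suitable radial range whose $\omega$-mass is comparable to $\widehat\omega(|z|)$, which is exactly what doubling guarantees. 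This is how the pointwise domination is obtained in \cite{PR,PR2}. With that repair, and with the covering/weak-$(1,1)$ input for $M_\omega$ over $\widehat D$ that you correctly isolate as the technical core (supplied in \cite{PR2}), your argument goes through; the two invocations of $\ell^1\hookrightarrow\ell^{s/r}$ to pass from $p$ to $q$ are standard and correctly placed.
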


\begin{remark} The operator $\psi \mapsto M_\omega(\psi)$ is sublinear, but its norm is defined like in the case of a linear operator.
\end{remark}
See \cite[Theorem 4.2]{PR} for the next lemma.
\begin{lemma}
Let $0<p<\infty$, $n\in \mathbb N$ and $f\in H(\mathbb D)$, and let $\omega$ be a radial weight. Then
\begin{equation}\label{equiv1}
\|f\|_{A_\omega^p}^p=p^2\int_{\mathbb D}|f(z)|^{p-2}|f'(z)|^2\omega^*(z)dA(z)+\omega(\mathbb D)|f(0)|^p,
\end{equation}
and \begin{equation}\label{equiv2}
\|f\|_{A_\omega^p}^p\simeq\int_{\mathbb D}\left(\int_{\Gamma(u)}|f^{(n)}(z)|^2\left(1-\left|\frac{z}{u}\right|\right)^{2n-2}dA(z)\right)^{\frac{p}{2}}\omega(u)dA(u)+\sum_{j=0}^{n-1}|f^{(j)}(0)|^p,
\end{equation} where the constants of comparison depend only on $p,n$ and $\omega$. In particular,
\begin{equation}\label{equiv3}
\|f\|_{A_\omega^2}^2=4\|f'\|_{A_{\omega^*}^2}^2+\omega(\mathbb D)|f(0)|^2.
\end{equation}
\end{lemma}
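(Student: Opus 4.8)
\textbf{Plan for proving the norm identities \eqref{equiv1}, \eqref{equiv2}, \eqref{equiv3}.}

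The plan is to establish the three formulas by reducing them to known Hardy-space identities via integration in polar coordinates, as indicated by the authors' reference to \cite[Chapter 4]{PR}. The radial structure of $\omega$ is the key feature: since $\omega(z)=\omega(|z|)$, every integral over $\mathbb D$ against $\omega\,dA$ can be sliced into integrals over circles $|z|=r$ followed by an outer integration in $r$ weighted by $\omega(r)$. On each circle one applies the corresponding flat Hardy-space formula, and the radial weight simply rides along as a multiplier.

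First I would prove \eqref{equiv1}, which is the cleanest. The starting point is the one-dimensional Littlewood--Paley identity for a fixed radius: for $f\in H(\mathbb D)$ and $0<\rho<1$, one has the classical Hardy-space relation expressing $\int_{\mathbb T}|f(\rho e^{i\theta})|^p\,\frac{d\theta}{2\pi}$ in terms of a double integral of $|f|^{p-2}|f'|^2$ over the disk of radius $\rho$, together with the value $|f(0)|^p$. Concretely, writing $h(\rho)=\frac{1}{2\pi}\int_0^{2\pi}|f(\rho e^{i\theta})|^p\,d\theta$, a computation with the Laplacian gives $\rho h'(\rho)=\frac{p^2}{2\pi}\int_{|z|<\rho}|f(z)|^{p-2}|f'(z)|^2\,\frac{dA(z)}{\,?\,}$ after integrating $\Delta(|f|^p)=p^2|f|^{p-2}|f'|^2$ over the subdisk and using Green's formula. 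I would then multiply by $\omega(\rho)$, integrate in $\rho$ from $0$ to $1$, and switch the order of integration via Fubini. The inner polar integral collapses precisely into the log-kernel defining $\omega^*$, namely $\int_{|z|}^1 \omega(s)s\log\frac{s}{|z|}\,ds=\omega^*(z)$, which is exactly where the weight $\omega^*$ enters. The boundary term produces $\omega(\mathbb D)|f(0)|^p$, yielding \eqref{equiv1}. The identity \eqref{equiv3} is then the special case $p=2$, where $|f|^{p-2}=1$ makes the formula an honest $L^2$ statement: $\|f\|_{A_\omega^2}^2=4\int_{\mathbb D}|f'(z)|^2\omega^*(z)\,dA(z)+\omega(\mathbb D)|f(0)|^2 =4\|f'\|_{A_{\omega^*}^2}^2+\omega(\mathbb D)|f(0)|^2$.

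For the square-function equivalence \eqref{equiv2}, I would again fix the radius and invoke the tent-space/area-function characterization of $H^p$ due to the classical theory: for each $u=re^{i\theta}$ the inner area integral over the Stolz region $\Gamma(u)$ of $|f^{(n)}(z)|^2(1-|z/u|)^{2n-2}$ is the $n$th-order Lusin area function, and its $L^p(\mathbb T)$ norm is comparable to $\|f\|_{H^p}^p$ up to the lower-order Taylor data $\sum_{j<n}|f^{(j)}(0)|^p$. Integrating this circle-by-circle against $\omega(r)\,dr$ and summing, the radial weight passes through the comparison constants unchanged because the area-function equivalence is uniform in the radius of the defining region; this is where the doubling hypothesis $\omega\in\widehat D$ is used to control the interaction between the tent geometry and the weight, ensuring the constants of comparison depend only on $p,n,\omega$.

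The main obstacle is the second identity \eqref{equiv2}, not the first. Whereas \eqref{equiv1} follows from an exact Green's-formula computation, \eqref{equiv2} is only a two-sided estimate and requires the nontrivial area-function machinery; the delicate point is to make the passage from the fixed-radius Hardy-space inequalities to the weighted disk version uniform, so that the doubling property of $\omega$ absorbs the geometry of the regions $\Gamma(u)$ without the constants degenerating as $|u|\to1^-$. Handling the interchange of integration and the precise accounting of the Taylor boundary terms $\sum_{j=0}^{n-1}|f^{(j)}(0)|^p$ are the routine-but-careful pieces; the conceptual work lies entirely in the uniform radial extension of the area-function estimate.
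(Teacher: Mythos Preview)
The paper does not actually prove this lemma: it is quoted verbatim from \cite[Theorem~4.2]{PR}, with the single line ``See \cite[Theorem~4.2]{PR} for the next lemma'' in place of a proof. Your plan is essentially the argument carried out in that reference, so in that sense you are on the same track as the cited source: the exact identity \eqref{equiv1} comes from the Hardy--Stein/Green's formula identity on circles of radius $r$, integrated against $2r\omega(r)\,dr$ with Fubini producing $\omega^*(z)=\int_{|z|}^1\omega(s)s\log\frac{s}{|z|}\,ds$; and \eqref{equiv2} is obtained by applying the Calder\'on area theorem on each disc $|z|<r$ (via the dilation $f_r(w)=f(rw)$) and integrating in $r$.

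One correction: you say the doubling hypothesis $\omega\in\widehat D$ is where the geometry of $\Gamma(u)$ is absorbed in \eqref{equiv2}. That is not right---the lemma is stated for an \emph{arbitrary} radial weight, and no doubling is needed. The uniformity in $r$ of the area-function comparison comes from scale invariance: with $u=re^{i\theta}$ one has $r\Gamma(e^{i\theta})=\Gamma(u)$, so the change of variables $z=rw$ reduces the inner integral over $\Gamma(u)$ to the standard Lusin area integral for $f_r$ on the unit disc, up to a harmless factor $r^{2n-2}$. The classical area theorem then gives constants depending only on $p$, $n$, and the aperture, and the integration against $\omega(r)r\,dr$ passes through without any condition on $\omega$ beyond radiality and integrability. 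A second small point: for \eqref{equiv1} when $0<p<2$ and $f$ has zeros, the factor $|f|^{p-2}$ is singular, so the Green's formula step needs the standard justification (e.g.\ work with $|f|^2+\varepsilon$ and let $\varepsilon\downarrow0$, or invoke subharmonicity of $|f|^p$ directly); you should flag this rather than leave it implicit.
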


Recall that the non-tangential maximal function of $f$ in the unit disk is defined by
$N(f)(u)=\sup_{z\in \Gamma(u)}|f(z)|$, $u\in \mathbb D\setminus\{0\}$. The following equivalent norm will be used in our proof also, see \cite[Lemma 4.4]{PR}.
\begin{lemma}\label{N(f)}
Let $0<p<\infty$ and let $\omega$ be a radial weight. Then $$\|N(f)\|_{A_\omega^p}\simeq\|f\|_{A_\omega^p},\quad \rm{for\ all}\ f\in A_\omega^p.$$
\end{lemma}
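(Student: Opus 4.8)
My plan is to prove the two inequalities separately, treating the easy lower bound directly and reducing the hard upper bound, circle by circle, to the classical nontangential maximal theorem for Hardy spaces. The radiality of $\omega$ is what makes this reduction possible.

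For the lower bound $\|f\|_{A_\omega^p}\lesssim\|N(f)\|_{A_\omega^p}$ I would argue pointwise. Although the vertex $u=re^{i\theta}$ does not itself lie in $\Gamma(u)$, the inward radial segment $\{te^{i\theta}:t<r\}$ does, since there $|\theta-\arg z|=0<\frac12(1-t/r)$. As $f$ is analytic, hence continuous, $|f(u)|=\lim_{t\to r^-}|f(te^{i\theta})|\le N(f)(u)$ for every $u\in\mathbb D\setminus\{0\}$, and integrating against $\omega\,dA$ gives the claim.

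For the reverse (main) inequality I would first decouple the radial and angular variables. Writing $z=se^{i\phi}$ and using $dA=\frac1\pi s\,ds\,d\phi$, one has $\|h\|_{A_\omega^p}^p=2\int_0^1 M_p(h,s)^p\,\omega(s)s\,ds$ for any $h$, where $M_p(h,s)^p=\frac1{2\pi}\int_0^{2\pi}|h(se^{i\phi})|^p\,d\phi$ is the integral mean on the circle of radius $s$. Hence it suffices to prove, for each fixed $s\in(0,1)$, the fiberwise estimate $M_p(N(f),s)^p\le C_p\,M_p(f,s)^p$ with $C_p$ independent of $s$ and of $f$. The key is a scaling identity: fixing $s\in(0,1)$ and setting $g_s(w)=f(sw)$, which is analytic on a neighbourhood of $\overline{\mathbb D}$, the substitution $w=z/s$ maps $\Gamma(se^{i\phi})$ bijectively onto the fixed Stolz angle $\Gamma(e^{i\phi})=\{w=\tau e^{i\psi}:|\psi-\phi|<\frac12(1-\tau)\}$ at the boundary point $e^{i\phi}$, with $f(z)=g_s(w)$. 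This is checked directly from the defining inequality $|\theta-\arg z|<\frac12(1-|z|/r)$. Consequently
$$N(f)(se^{i\phi})=\sup_{z\in\Gamma(se^{i\phi})}|f(z)|=\sup_{w\in\Gamma(e^{i\phi})}|g_s(w)|=N_{\mathbb T}(g_s)(e^{i\phi}),$$
where $N_{\mathbb T}$ is the ordinary Hardy-space nontangential maximal function. The classical Hardy-space nontangential maximal theorem (valid for all $0<p<\infty$, with constant depending only on $p$ and the fixed aperture) then gives
$$\frac1{2\pi}\int_0^{2\pi}N_{\mathbb T}(g_s)(e^{i\phi})^p\,d\phi\le C_p\,\frac1{2\pi}\int_0^{2\pi}|g_s(e^{i\phi})|^p\,d\phi=C_p\,M_p(f,s)^p,$$
since the boundary values of $g_s$ are $g_s(e^{i\phi})=f(se^{i\phi})$. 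The left-hand side is exactly $M_p(N(f),s)^p$, so the fiberwise estimate holds, and integrating against $\omega(s)s\,ds$ completes the proof (and shows $N(f)\in A_\omega^p$).

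The main, and really the only, obstacle is locating and correctly invoking the classical nontangential maximal estimate on Hardy spaces in the range $0<p\le1$, where it rests on the subharmonicity of $|g_s|^p$ rather than on the Hardy--Littlewood maximal theorem; everything else is bookkeeping in polar coordinates. I would emphasize that this route uses only the radiality of $\omega$ and never its membership in $\widehat{D}$ or any doubling property, which is precisely why the lemma holds for arbitrary radial weights.
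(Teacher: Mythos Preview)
Your argument is correct. The paper does not give its own proof of this lemma; it merely cites \cite[Lemma~4.4]{PR}. Your route---polar decomposition followed by the scaling $g_s(w)=f(sw)$ to reduce the fiberwise estimate at radius $s$ to the classical Hardy-space nontangential maximal theorem with a fixed aperture---is exactly the standard proof and, in fact, is the argument given in the cited reference. The only point worth double-checking, which you handled correctly, is that the constant in the Hardy maximal inequality depends only on $p$ and the aperture, not on $s$, so that integration against $\omega(s)s\,ds$ is legitimate.
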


Proposition 4.7 in \cite{PR} also holds for weights in the class $\widehat{D}$ and it states that $f \in C^\alpha(\omega^*), \, \alpha > 1$ if and only if $$M_\infty(f',r) \lesssim \frac{(\omega^*(r))^{\frac{\alpha-1}{2}}}{1-r}, \quad 0 \le r < 1$$ and $f \in C_0^\alpha(\omega^*)$ if and only if $$M_\infty(f',r) = \textup{o}\left(\frac{(\omega^*(r))^{\frac{\alpha-1}{2}}}{1-r}\right), \quad r \to 1^-.$$
Furthermore, the proof of Proposition 4.7 in \cite{PR} implies that
\begin{lemma}\label{littleg}
Let $0<\alpha<\infty$, $\omega\in \widehat{D}$ and $g\in \mathcal{ C}^{2\alpha+1}(\omega^*)$. Then
\begin{equation}
\limsup_{|z|\to 1^-}|g'(z)|(1-|z|)\omega^*(z)^{-\alpha}=\limsup_{|a|\to 1^-}\left(\frac{\int_{S(a)}|g'(z)|^2\omega^*(z)dA(z)}{\omega(S(a))^{2\alpha+1}}\right)^{\frac{1}{2}}.\end{equation}
\end{lemma}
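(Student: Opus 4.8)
The plan is to establish the two one-sided estimates and conclude the comparability of the two $\limsup$ quantities; this shows in particular that they are finite together and vanish together, which is the content used later. Throughout I write $v=\omega^*$ and $\beta=2\alpha+1>1$, and I use the facts recorded above: $v$ is a regular weight, $\omega(S(a))\simeq v(|a|)$ as $|a|\to1^-$, and (computing the radial integral of a radial weight over a Carleson box directly) $\omega(S(a))\simeq(1-|a|)\widehat\omega(|a|)$, so that $v(s)\simeq(1-s)\widehat\omega(s)$. Since $v$ is radial and $(1-|z|)v(|z|)^{-\alpha}$ is constant on circles, the left-hand $\limsup$ equals $\limsup_{r\to1^-}(1-r)v(r)^{-\alpha}M_\infty(g',r)$, where $M_\infty(g',r)=\sup_{|z|=r}|g'(z)|$; I abbreviate this by $L_1$ and the right-hand side by $L_2$.

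For $L_1\lesssim L_2$ I would argue by subharmonicity. Fix $z$ with $r=|z|$ near $1$ and a small $\delta\in(0,1)$, and apply the sub--mean-value property of $|g'|^2$ on the Whitney disk $D=D(z,\delta(1-r))$, whose normalized area is $\simeq(1-r)^2$. On $D$ the regularity of $v$ gives $v(w)\simeq v(r)$, so one may insert $v$ into the average at the cost of a constant, obtaining $|g'(z)|^2\lesssim (1-r)^{-2}v(r)^{-1}\int_D|g'(w)|^2v(w)\,dA(w)$. For $\delta$ small, $D$ lies in a Carleson box $S(a)$ with $1-|a|\simeq 1-r$ and $\omega(S(a))\simeq v(r)$; enlarging the domain of integration and dividing by $v(r)^{2\alpha}$ then yields $|g'(z)|^2(1-r)^2v(r)^{-2\alpha}\lesssim \int_{S(a)}|g'|^2v\,dA\big/\omega(S(a))^{\beta}$. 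Taking $\limsup$ as $r\to1^-$ (so $|a|\to1^-$) gives $L_1^2\lesssim L_2^2$.

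For $L_2\lesssim L_1$ I would pass from the box average to a radial integral. Fix $\epsilon>0$; by the definition of $L_1$ there is $r_0<1$ with $M_\infty(g',s)\le (L_1+\epsilon)v(s)^{\alpha}/(1-s)$ for $s\ge r_0$. For $|a|\ge r_0$, bounding $|g'(z)|\le M_\infty(g',|z|)$ on $S(a)$ and carrying out the angular integration ($|I_a|\simeq 1-|a|$) gives $\int_{S(a)}|g'|^2v\,dA\lesssim (1-|a|)\int_{|a|}^1 M_\infty(g',s)^2 v(s)\,ds\le (1-|a|)(L_1+\epsilon)^2\int_{|a|}^1 \frac{v(s)^{\beta}}{(1-s)^2}\,ds$. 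The decisive estimate is $\int_{r}^1 v(s)^{\beta}(1-s)^{-2}\,ds\lesssim v(r)^{\beta}/(1-r)$: writing $v(s)\simeq(1-s)\widehat\omega(s)$ turns the integrand into $(1-s)^{\beta-2}\widehat\omega(s)^{\beta}$, and since $\widehat\omega$ is nonincreasing and $\beta-1>0$ one bounds it by $\widehat\omega(r)^{\beta}\int_r^1(1-s)^{\beta-2}\,ds\simeq (1-r)^{\beta-1}\widehat\omega(r)^{\beta}\simeq v(r)^{\beta}/(1-r)$. Substituting and dividing by $\omega(S(a))^{\beta}\simeq v(|a|)^{\beta}$ gives $\int_{S(a)}|g'|^2v\,dA\big/\omega(S(a))^{\beta}\lesssim (L_1+\epsilon)^2$; taking $\limsup$ and then $\epsilon\to0$ yields $L_2^2\lesssim L_1^2$.

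The main obstacle is the decisive integral estimate in the second step, which is precisely where the hypotheses are used: it requires both $\beta=2\alpha+1>1$ (so that $(1-s)^{\beta-2}$ is integrable at $1$) and the doubling of $\omega$, entering through the monotonicity of $\widehat\omega$ and the comparison $v(s)\simeq(1-s)\widehat\omega(s)$; without $\beta>1$ the radial integral may diverge and the method fails. A secondary point to treat carefully is the geometric bookkeeping in the first step, namely placing the Whitney disk inside a single Carleson box with comparable weight. Finally, I would note that the constants produced by subharmonicity and by regularity are genuine absolute constants, so that the two estimates together give the comparability $L_1\simeq L_2$ (equivalently, the two $\limsup$'s are finite together and zero together), which is the form in which the identity is applied in Theorems \ref{norm} and \ref{essnorm}.
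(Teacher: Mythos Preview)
Your argument is correct and is precisely the route the paper has in mind: the paper gives no self-contained proof of this lemma, merely remarking that it is implied by the proof of Proposition~4.7 in \cite{PR}. That proposition characterizes $\mathcal C^\alpha(\omega^*)$ and $\mathcal C_0^\alpha(\omega^*)$ via the growth of $M_\infty(g',r)$, and its proof uses exactly your two ingredients --- subharmonicity of $|g'|^2$ on Whitney disks for $L_1\lesssim L_2$, and polar integration together with the regularity comparison $\omega^*(s)\simeq(1-s)\widehat\omega(s)$ for $L_2\lesssim L_1$, the latter relying (as you point out) on $\beta=2\alpha+1>1$ so that $(1-s)^{\beta-2}$ is integrable near $1$.

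One minor remark: you establish $L_1\simeq L_2$ rather than the literal equality written in the lemma. This is all that your method (or the method of \cite{PR}) yields, and it is all that the paper uses --- in Theorems~\ref{norm} and~\ref{essnorm} the lemma is invoked only to obtain the comparabilities $\|g\|_{*,\alpha,\omega}\simeq C$ and $B\simeq D$.
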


In the next lemma, we classify spaces $C^\alpha(\omega^*)$ and $C_0^\alpha(\omega^*)$ according to how fast the quantity $$\frac{(\omega^*(r))^{\frac{\alpha-1}{2}}}{1-r}$$ grows as $r \to 1^-$. The proof is straightforward and we omit it.

\begin{lemma}
\label{liminf_lemma}
Let $\omega \in \widehat{D}, \, \alpha > 1$ and $$F_{\alpha,\omega}(r) = \frac{(\omega^*(r))^{\frac{\alpha-1}{2}}}{1-r}, \quad r \in ]0,1[.$$ Define $$\beta=\liminf_{r \to 1^-}F_{\alpha,\omega}(r).$$ Then

\begin{enumerate}[(i)]
  \item \textrm{If $\beta = 0$, then $C_0^\alpha(\omega^*)=C^\alpha(\omega^*)=\{f \in H(\mathbb{D})|\, \textrm{$f$ is a constant function}\};$}
  \item \textrm{If $\beta \in ]0,\infty[$, then $C_0^\alpha(\omega^*)=\{f \in H(\mathbb{D})|\, \textrm{$f$ is a constant function}\}$ and $C^\alpha(\omega^*)=\{f \in H(\mathbb D)|\, f' \in H^\infty(\mathbb D)\};$}
  \item \textrm{If $\beta = \infty$, then $\{f \in H(\mathbb D)|\, f' \in H^\infty(\mathbb D)\} \subsetneq C_0^\alpha(\omega^*) \subset C^\alpha(\omega^*).$}
\end{enumerate}
\end{lemma}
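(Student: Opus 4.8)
The plan is to recast membership in $C^\alpha(\omega^*)$ and $C_0^\alpha(\omega^*)$ as growth conditions on the maximum modulus $M_\infty(f',r)$, via the $\widehat{D}$-version of Proposition 4.7 of \cite{PR} recalled just above: $f\in C^\alpha(\omega^*)$ exactly when $M_\infty(f',r)\lesssim F_{\alpha,\omega}(r)$ for $0\le r<1$, and $f\in C_0^\alpha(\omega^*)$ exactly when $M_\infty(f',r)=\textup{o}(F_{\alpha,\omega}(r))$ as $r\to1^-$. To this I would add two elementary facts. First, for $h\in H(\mathbb D)$ the map $r\mapsto M_\infty(h,r)$ is non-decreasing by the maximum principle, so it has a limit $L\in[0,\infty]$ at $1^-$, with $L<\infty$ precisely when $h\in H^\infty$ and $L=0$ precisely when $h\equiv0$. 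Second, $F_{\alpha,\omega}$ is continuous and strictly positive on $(0,1)$ and tends to $\infty$ as $r\to0^+$ (because $\omega^*(r)\to\infty$ there), whence $m:=\inf_{(0,1)}F_{\alpha,\omega}>0$ whenever $\beta>0$. I would also use that every constant function lies in $C_0^\alpha(\omega^*)$, the defining quotient then being identically zero.

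For (i), fix $r_n\to1^-$ with $F_{\alpha,\omega}(r_n)\to\beta=0$. If $f\in C^\alpha(\omega^*)$ then $M_\infty(f',r_n)\lesssim F_{\alpha,\omega}(r_n)\to0$, and since $M_\infty(f',\cdot)$ is non-decreasing its limit at $1^-$ equals this subsequential limit $0$, forcing $f'\equiv0$; with the remark above, both $C^\alpha(\omega^*)$ and $C_0^\alpha(\omega^*)$ reduce to the constant functions. For (ii), fix $r_n\to1^-$ with $F_{\alpha,\omega}(r_n)\to\beta\in(0,\infty)$. If $f\in C^\alpha(\omega^*)$, then $M_\infty(f',r_n)\lesssim F_{\alpha,\omega}(r_n)\to\beta<\infty$, so $\lim_{r\to1^-}M_\infty(f',r)<\infty$ and $f'\in H^\infty$; conversely, $f'\in H^\infty$ gives $M_\infty(f',r)\le\|f'\|_{H^\infty}\le(\|f'\|_{H^\infty}/m)F_{\alpha,\omega}(r)$, so $f\in C^\alpha(\omega^*)$; hence $C^\alpha(\omega^*)=\{f\in H(\mathbb D):f'\in H^\infty\}$. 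If $f\in C_0^\alpha(\omega^*)$, then $M_\infty(f',r_n)=\textup{o}(F_{\alpha,\omega}(r_n))$ together with $F_{\alpha,\omega}(r_n)\to\beta<\infty$ forces $M_\infty(f',r_n)\to0$, hence $f'\equiv0$ as in (i), so $C_0^\alpha(\omega^*)$ consists only of constants.

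For (iii), $\beta=\infty$ simply means $F_{\alpha,\omega}(r)\to\infty$. Then $f'\in H^\infty$ yields $M_\infty(f',r)\le\|f'\|_{H^\infty}=\textup{o}(F_{\alpha,\omega}(r))$, so $f\in C_0^\alpha(\omega^*)$, and $C_0^\alpha(\omega^*)\subseteq C^\alpha(\omega^*)$ is immediate. The only step I expect to require genuine work is the \emph{strictness} of $\{f:f'\in H^\infty\}\subsetneq C_0^\alpha(\omega^*)$, that is, producing $f$ with $f'\notin H^\infty$ but $M_\infty(f',r)=\textup{o}(F_{\alpha,\omega}(r))$. For this I would put $\phi(r):=\inf_{r\le s<1}F_{\alpha,\omega}(s)$, which is non-decreasing and tends to $\infty$, choose $r_k\uparrow1$ with $\phi(r_k)\ge k$ and, by pushing each $r_k$ closer to $1$ if necessary, with the gaps $n_k:=\lceil(1-r_k)^{-1}\rceil$ satisfying $n_{k+1}\ge3n_k$, and set $h(z):=\sum_{k\ge1}k^{-1}z^{n_k}$, $f(z):=\int_0^z h(\zeta)\,d\zeta$. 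Then $f'=h\notin H^\infty$, since along the positive radius $h(r)=\sum_k k^{-1}r^{n_k}$ increases to $\sum_k k^{-1}=\infty$ as $r\uparrow1$ by monotone convergence, so $M_\infty(h,r)\ge h(r)\to\infty$. On the other hand, for $r\in[r_j,r_{j+1})$ the monotonicity of $M_\infty(h,\cdot)$ together with a termwise estimate at $r_{j+1}$ --- the terms with $k\le j+1$ contributing $\le\sum_{k\le j+1}k^{-1}\lesssim\log j$, and those with $k>j+1$ forming a geometric tail because $r_{j+1}^{n_k}\le\bigl(r_{j+1}^{n_{j+1}}\bigr)^{3^{k-j-1}}\le e^{-c\,3^{k-j-1}}$ for large $j$ --- gives $M_\infty(h,r)\lesssim\log j$, while $F_{\alpha,\omega}(r)\ge\phi(r_j)\ge j$; hence $M_\infty(h,r)=\textup{o}(F_{\alpha,\omega}(r))$. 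Thus $f\in C_0^\alpha(\omega^*)$ with $f'\notin H^\infty$, giving the strict inclusion and finishing the proof.
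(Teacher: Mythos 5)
Your argument is correct. Note that the paper does not actually supply a proof of this lemma (it is dismissed as ``straightforward''), so there is nothing to compare against line by line; but your route --- translating membership in $C^\alpha(\omega^*)$ and $C_0^\alpha(\omega^*)$ into the growth conditions $M_\infty(f',r)\lesssim F_{\alpha,\omega}(r)$ and $M_\infty(f',r)=\textup{o}(F_{\alpha,\omega}(r))$ via the version of Proposition 4.7 of \cite{PR} recalled immediately before the lemma, and then exploiting the monotonicity of $r\mapsto M_\infty(f',r)$ along a sequence realizing the $\liminf$ --- is clearly the intended one. The only genuinely nontrivial point is the strictness of the inclusion in (iii), which the ``straightforward'' label glosses over, and your lacunary construction handles it correctly: with $n_{k+1}\ge 3n_k$ and $n_k\simeq(1-r_k)^{-1}$ one indeed gets $r_{j+1}^{n_{j+1}}\le e^{-1}$, so the tail $\sum_{k>j+1}k^{-1}r_{j+1}^{n_k}$ is bounded by $\sum_{m\ge1}e^{-3^m}<\infty$, giving $M_\infty(h,r)=O(\log j)$ against $F_{\alpha,\omega}(r)\ge j$ on $[r_j,r_{j+1})$, while $h(r)\to\sum_k k^{-1}=\infty$ radially shows $h\notin H^\infty$. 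The auxiliary facts you use (positivity and continuity of $F_{\alpha,\omega}$ on $(0,1)$, its blow-up at $0^+$, hence $\inf F_{\alpha,\omega}>0$ when $\beta>0$) are all valid for $\omega\in\widehat{D}$.
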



A function-theoretic quantity to estimate the distance of a general $C^\alpha(\omega^*)$-function from $C_0^\alpha(\omega^*)$ is given by
\begin{lemma}\label{distance} Let $\omega\in \widehat{D}$ and $\alpha\geq 1$.
For  $g\in C^\alpha(\omega^*)$,
$$\hbox{dist}(g, C_0^\alpha(\omega^*))\simeq \limsup_{|I|\to 0}\left(\frac{\int_{S(I)}|g'(z)|^2\omega^*(z)dA(z)}{\omega(S(I))^\alpha}\right)^{\frac{1}{2}}.$$
\end{lemma}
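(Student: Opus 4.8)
Throughout write $d$ for the right-hand side of the asserted equivalence, $R_h(I)=\omega(S(I))^{-\alpha}\int_{S(I)}|h'|^2\omega^*\,dA$ for an arc $I\subseteq\mathbb{T}$, and $F_{\alpha,\omega}(r)=\omega^*(r)^{(\alpha-1)/2}/(1-r)$ as in Lemma~\ref{liminf_lemma}. The plan is to prove the two inequalities separately. The lower bound $d\le\operatorname{dist}(g,C_0^\alpha(\omega^*))$ is essentially free: for any $h\in C_0^\alpha(\omega^*)$ the triangle inequality in $L^2(S(I),\omega^*\,dA)$ gives $R_g(I)^{1/2}\le\|g-h\|_{*,\alpha,\omega}+R_h(I)^{1/2}$, and letting $|I|\to0$ kills the last term since $h\in C_0^\alpha(\omega^*)$; taking the infimum over $h$ finishes it, even with constant $1$.

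For the upper bound the natural approximants are the dilates $g_s(z)=g(sz)$, $0<s<1$, which extend holomorphically past $\mathbb{T}$, so $g_s'\in H^\infty(\mathbb{D})$. I would first dispose of two degenerate regimes via Lemma~\ref{liminf_lemma}: if $\beta:=\liminf_{r\to1^-}F_{\alpha,\omega}(r)=0$, then $g$ is constant and both sides vanish; if $\beta\in\,]0,\infty[$, then $C_0^\alpha(\omega^*)$ consists of the constants, so $\operatorname{dist}(g,C_0^\alpha(\omega^*))=\inf_{c\in\mathbb{C}}\|g-c\|_{\mathcal{C}^\alpha(\omega^*)}=\|g\|_{*,\alpha,\omega}$, and since $g'\in H^\infty$ and $F_{\alpha,\omega}\simeq1$ near $1$ in this regime, the sub-mean-value inequality for $|g'|^2$ on Whitney disks, the maximum modulus principle ($M_\infty(g',r)\uparrow\|g'\|_\infty$), and Lemma~\ref{littleg} together give $\|g\|_{*,\alpha,\omega}\simeq\|g'\|_\infty\simeq d$. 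This reduces everything to the case $\beta=\infty$, i.e.\ $F_{\alpha,\omega}(r)\to\infty$; here $\{f:f'\in H^\infty\}\subseteq C_0^\alpha(\omega^*)$, so every $g_s\in C_0^\alpha(\omega^*)$.

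In the main case I would \emph{not} let $s\to1$ arbitrarily, but choose $s_n\uparrow1$ realizing a right running minimum of $F_{\alpha,\omega}$, i.e.\ $F_{\alpha,\omega}(s_n)=\min_{[s_n,1)}F_{\alpha,\omega}$ — such $s_n$ exist and tend to $1$ because $F_{\alpha,\omega}$ is continuous and blows up at the boundary — so that $F_{\alpha,\omega}(s_n)\le F_{\alpha,\omega}(r)$ for all $s_n\le r<1$. Since $(g-g_{s_n})(0)=0$, it suffices to prove $\limsup_n\|g-g_{s_n}\|_{*,\alpha,\omega}\lesssim d$. Given $\varepsilon>0$, fix $\delta>0$ with $R_g(I)\le(d+\varepsilon)^2$ for $|I|<\delta$; a Whitney-disk sub-mean-value argument upgrades this to $M_\infty(g',r)\lesssim(d+\varepsilon)F_{\alpha,\omega}(r)$ for $r$ near $1$. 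For $n$ large (so $1-s_n<\delta/2$) I would bound $R_{g-g_{s_n}}(I)$ in three ranges of $|I|$. For $1-s_n\le|I|<\delta/2$: from $|(g-g_{s_n})'|^2\lesssim|g'|^2+|g_{s_n}'|^2$ and the substitution $w=s_nz$ (using that $\omega^*$ is radially decreasing), dominate $\int_{S(I)}|g_{s_n}'|^2\omega^*\,dA$ by $\int_{S(I^\sharp)}|g'|^2\omega^*\,dA$ over an arc with $|I|\le|I^\sharp|\le2|I|<\delta$, then invoke $\widehat{D}$-doubling ($\omega(S(I^\sharp))\simeq\omega(S(I))$); this gives $R_{g-g_{s_n}}(I)\lesssim(d+\varepsilon)^2$. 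For $|I|<1-s_n$: now $S(I)\subseteq\{|z|>s_n\}$, so on $S(I)$ one has $|g_{s_n}'(z)|\le M_\infty(g',s_n|z|)\lesssim(d+\varepsilon)F_{\alpha,\omega}(s_n|z|)\simeq(d+\varepsilon)F_{\alpha,\omega}(s_n)$ by regularity of $\omega^*$, and using the $\widehat{D}$-consequences $\int_{S(I)}\omega^*\,dA\simeq|I|^2\omega^*(1-|I|)$ and $\omega(S(I))\simeq\omega^*(1-|I|)$ this turns into $\omega(S(I))^{-\alpha}\int_{S(I)}|g_{s_n}'|^2\omega^*\,dA\lesssim(d+\varepsilon)^2\bigl(F_{\alpha,\omega}(s_n)/F_{\alpha,\omega}(1-|I|)\bigr)^2\le(d+\varepsilon)^2$, the last step being precisely the running-minimum property since $1-|I|>s_n$; with $R_g(I)\le(d+\varepsilon)^2$ this again gives $R_{g-g_{s_n}}(I)\lesssim(d+\varepsilon)^2$. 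For $|I|\ge\delta/2$: fix $\rho<1$ with $1-\rho<\delta/2$, split $S(I)$ into the compact piece $S(I)\cap\{|z|\le\rho\}$, where $g_{s_n}'\to g'$ uniformly so the contribution is $o_n(1)$ while $\omega(S(I))^\alpha\gtrsim1$, and the boundary collar $S(I)\cap\{|z|>\rho\}$, covered by $\simeq|I|/(1-\rho)$ boxes of size $1-\rho<\delta/2$ each controlled by the previous cases, and sum using $\sum_J\omega(S(J))^\alpha\lesssim\omega(S(I))^\alpha$ (again $\widehat{D}$-doubling, valid as $1-\rho<|I|$). Taking the supremum over $I$, then $n\to\infty$, then $\varepsilon\to0$ gives $\operatorname{dist}(g,C_0^\alpha(\omega^*))\lesssim d$.

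The hardest point will be controlling $\|g-g_{s_n}\|_{*,\alpha,\omega}$ on \emph{every} Carleson box at once: the seminorm is a genuine supremum, the hypothesis on $g$ only constrains boxes of size $<\delta$, and for a general $\omega\in\widehat{D}$ the profile $F_{\alpha,\omega}$ need not be monotone, so for a careless choice of $s$ the dilate $g_s$ can have large $\|\cdot\|_{*,\alpha,\omega}$ on the tiny boxes of size $<1-s$. Choosing the dilation parameters at right running minima of $F_{\alpha,\omega}$ is exactly what cures this (the middle range above), and the $\widehat{D}$-doubling — equivalently, the regularity of $\omega^*$ and the ensuing comparisons for $\int_{S(I)}\omega^*\,dA$ and $\omega(S(I))$ — is used pervasively. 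A secondary subtlety is that the dilation idea fails outright in the two degenerate regimes, which is why they must be isolated first. (When $\alpha=1$, the case relevant to $A_\omega^1$, one has $F_{1,\omega}(r)=1/(1-r)$, which is increasing, so the running-minimum precaution becomes unnecessary.)
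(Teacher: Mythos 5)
Your proposal is correct, and its overall architecture (trivial lower bound; trichotomy via Lemma~\ref{liminf_lemma}; dilates $g_s$ as the approximants in the non-degenerate case; change of variables $w=sz$ plus the monotonicity of $\omega^*$ and $\widehat{D}$-doubling to absorb $\int_{S(I)}|g_s'|^2\omega^*\,dA$ into a slightly larger box) coincides with the paper's. The genuine difference is in how the two arguments control the seminorm of $g-g_s$ on boxes with $|I|<1-s$. The paper fixes a small $\delta$, estimates $\sup_{|I|\ge\delta}$ by $\|g'-g_r'\|_{A^2_{\omega^*}}^2\to 0$ (cleaner than your compact-piece-plus-collar covering, though both work), and for $|I|<\delta$ asserts that $rS(I)\subset S'(I)$ for all $r\in\,]1-\delta,1[$, where $S'(I)$ is a fixed dilate of $S(I)$; that containment only holds when $1-r\lesssim|I|$, so the regime $|I|\ll 1-r$ is not actually covered by the paper's displayed inequalities. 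Your running-minimum choice $F_{\alpha,\omega}(s_n)=\min_{[s_n,1)}F_{\alpha,\omega}$, combined with the pointwise bound $M_\infty(g',\rho)\lesssim (d+\varepsilon)F_{\alpha,\omega}(\rho)$ and the identity $\omega^*(S(I))/\omega(S(I))^\alpha\simeq F_{\alpha,\omega}(1-|I|)^{-2}$, is precisely the device needed to handle those tiny boxes, and it correctly degenerates to "no precaution needed" when $\alpha=1$. So your treatment of the main case is, if anything, more complete than the paper's. In the intermediate regime $\beta\in\,]0,\infty[$ you also diverge: the paper locates a nontangential set near a boundary point where $|g'|\ge\tfrac12\|g'\|_{H^\infty}$ and compares $G_{\omega,g}$ with $G_{\omega,\mathrm{id}}$ directly, whereas you go through $\|g\|_{*,\alpha,\omega}\simeq\|g'\|_{H^\infty}\simeq d$. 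One imprecision there: $\liminf_{r\to1^-}F_{\alpha,\omega}(r)\in\,]0,\infty[$ does \emph{not} force $F_{\alpha,\omega}\simeq 1$ near the boundary (the profile can still have $\limsup=\infty$), so you cannot simply read $d\simeq\limsup|g'(z)|$ off Lemma~\ref{littleg}. The conclusion survives: evaluate along radii $r_n$ with $F_{\alpha,\omega}(r_n)\to\beta$ and use $M_\infty(g',r_n)\to\|g'\|_{H^\infty}$ to get $d\ge\|g'\|_{H^\infty}/\beta$, while $\|g\|_{*,\alpha,\omega}\le\|g'\|_{H^\infty}\sup_t G_{\omega,\mathrm{id}}(t)\lesssim\|g'\|_{H^\infty}$; this is exactly the maximum-modulus step you already invoke, so the fix is cosmetic, but the "$F_{\alpha,\omega}\simeq1$" justification as written should be replaced.
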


\begin{proof}
The lower estimate is trivial from the definitions of $C^\alpha(\omega^*)$ and
$C_0^\alpha(\omega^*)$.

For the upper estimate we consider three cases. Let $\beta$ be the number defined in Lemma \ref{liminf_lemma}.

{\it Case 1}- Assume $\alpha > 1$ and $\beta=0$.

It follows immediately from the case (i) of Lemma \ref{liminf_lemma} that $$\hbox{dist}(g, C_0^\alpha(\omega^*))\simeq \limsup_{|I|\to 0}\left(\frac{\int_{S(I)}|g'(z)|^2\omega^*(z)dA(z)}{\omega(S(I))^\alpha}\right)^{\frac{1}{2}}.$$

{\it Case 2}- Assume $\alpha > 1$ and $\beta \in ]0,\infty[$.

Define $$G_{\omega,g} \colon ]0,1] \to \mathbb{R_+}, \, G_{\omega,g}(t) = \sup_{|I| = t}\left(\frac{\int_{S(I)}|g'(z)|^2 \omega^*(z)dA(z)}{\omega(S(I))^\alpha}\right)^{1/2}$$ and $G = G_{\omega,\textup{id}}$. Now $\hbox{dist}(g, C_0^\alpha(\omega^*))=\sup_{t \in ]0,1]}G_{\omega,g}(t)$, since $$C_0^\alpha(\omega^*) = \{f \in H(\mathbb{D})|\, \textup{$f$ is a constant function}\}$$ by the case (ii) of Lemma \ref{liminf_lemma}. It is enough to show that $$\sup_{t \in ]0,1]}G_{\omega,g}(t) \lesssim \limsup_{t \to 0+}G_{\omega,g}(t),$$ since
the direction $$\limsup_{t \to 0+}G_{\omega,g}(t) \le \sup_{t \in ]0,1]}G_{\omega,g}(t)$$ is evident.

It holds that $\limsup_{t \to 0+}G(t) \in ]0,\infty[$, since  $\textup{id} \in C^\alpha(\omega^*) \setminus C_0^\alpha(\omega^*)$ by the case (ii) of Lemma \ref{liminf_lemma}. Now
\begin{equation}
\label{eq: sup_est}
\sup_{t \in ]0,1]}G(t) \simeq \limsup_{t \to 0+}G(t).
\end{equation}
Since $g' \in H^\infty(\mathbb{D})$, we can assume by rotation invariance that there exist the non-tangential limit $g'(1)=\lim_{\substack{z \to 1 \\ z \in N}}g'(z)$ s.t.\ $|g'(1)| > \frac{1}{2}\|g'\|_{H^\infty(\mathbb{D})}$, where $N \subset \mathbb{D}$ is any non-tangential set with vertex at $z = 1$. Also, there exist $r_0 \in [0,1[$, a Carleson window $S_0 = S(r_0)$ and a non-tangential set $T \subset S_0$ with vertex at $z=1$  s.t.\ $|g'(z)| \ge \frac{1}{2}\|g'\|_{H^\infty(\mathbb{D})}$ for all $z \in T$ and $\omega^*(T) \simeq \omega^*(S_0).$ Let $S=S(I)$ be any Carleson window s.t.\ $|I|\le 1-r_0$. Choose a Carleson window $S'=S'(I') \subset S_0$ with $|I'|=|I|$ and a non-tangential set $T' \subset S' \cap T$ with vertex at $z =1$ s.t.\  $\omega^*(T') \simeq \omega^*(S')$. Now we can estimate
\begin{eqnarray*}
\sup_{t \le 1-r_0}G_{\omega,g}(t) &\ge&
\left(\frac{\int_{S'}|g'(z)|^2\omega^*(z)dA(z)}{\omega(S')^\alpha}\right)^{\frac{1}{2}}
\\
&\ge&
\left(\frac{\int_{T'}|g'(z)|^2\omega^*(z)dA(z)}{\omega(S')^\alpha}\right)^{\frac{1}{2}} \gtrsim \| g' \|_{H^\infty(\mathbb{D})} \left(\frac{\omega^*(T')}{\omega(S')^\alpha}\right)^{\frac{1}{2}}
\\
&\simeq& \| g' \|_{H^\infty(\mathbb{D})} \left(\frac{\omega^*(S')}{\omega(S')^\alpha}\right)^{\frac{1}{2}} = \| g' \|_{H^\infty(\mathbb{D})} \left(\frac{\omega^*(S)}{\omega(S)^\alpha}\right)^{\frac{1}{2}}.
\end{eqnarray*}
Hence $$\sup_{t \le 1-r_0}G_{\omega,g}(t) \gtrsim \| g' \|_{H^\infty(\mathbb{D})} \sup_{t \le 1-r_0}G(t)$$ and letting $r_0 \to 1^-$ we get
\begin{equation}
\label{eq: limsup_est}
\limsup_{t \to 0+}G_{\omega,g}(t) \gtrsim \| g' \|_{H^\infty(\mathbb{D})} \limsup_{t \to 0+}G(t).
\end{equation}
Now by \eqref{eq: sup_est} and \eqref{eq: limsup_est} we get
\begin{eqnarray*}
\sup_{t \in ]0,1]}G_{\omega,g}(t) \le \|g'\|_{H^\infty(\mathbb{D})} \sup_{t \in ]0,1]}G(t) \simeq  \|g'\|_{H^\infty(\mathbb{D})} \limsup_{t \to 0+}G(t) \lesssim \limsup_{t \to 0+}G_{\omega,g}(t).
\end{eqnarray*}
Thus we have established the upper estimate in the case $\beta \in ]0,\infty[$.

{\it Case 3}- Assume $\alpha = 1$ or $\beta=\infty$.

Now it holds that $$\{f \in H(\mathbb{D})|\, f' \in H^\infty(\mathbb{D})\} \subset C_0^\alpha(\omega^*).$$ Set $g_r(z)=g(rz)$ for $0<r<1$. Then $g_r\in C_0^\alpha(\omega^*)$.  Fix $0 < \delta < 1$. Now
\begin{eqnarray*}
\textup{dist}(g, C_0^\alpha(\omega^*))^2 &\le& \limsup_{r \to 1^-}\| g - g_r\|_{C^\alpha(\omega^*)}^2
\\
&\le& \limsup_{r \to 1^-}\left(\sup_{|I| \ge \delta}\frac{1}{\omega(S(I))^\alpha}\int_{S(I)}|g'(z)-r g'(rz)|^2\omega^*(z)dA(z)\right.
\\
&+& \left.\sup_{|I| < \delta}\frac{1}{\omega(S(I))^\alpha}\int_{S(I)}|g'(z)-r g'(rz)|^2\omega^*(z)dA(z)\right)
\\
&=& \limsup_{r \to 1^-}\left(\sup_{|I| \ge \delta}\frac{1}{\omega(S(I))^\alpha}\int_{S(I)}|g'(z)-r g'(rz)|^2\omega^*(z)dA(z)\right)
\\
&+& \limsup_{r \to 1^-}\left(\sup_{|I| < \delta}\frac{1}{\omega(S(I))^\alpha}\int_{S(I)}|g'(z)-r g'(rz)|^2\omega^*(z)dA(z)\right),
\end{eqnarray*}
where $$\sup_{|I| \ge \delta}\frac{1}{\omega(S(I))^\alpha}\int_{S(I)}|g'(z)-r g'(rz)|^2\omega^*(z)dA(z) \lesssim \|g'-(g_r)'\|_{A^2_{\omega^*}}^2 \to 0, \quad r \to 1^-.$$ Thus we have
\begin{eqnarray}
\label{eq: distest1}
\textup{dist}(g, C_0^\alpha(\omega^*))^2 &\lesssim& \limsup_{r \to 1^-}\left(\sup_{|I| < \delta}\frac{1}{\omega(S(I))^\alpha}\int_{S(I)}|g'(z)-r g'(rz)|^2\omega^*(z)dA(z)\right)
\nonumber \\
&\lesssim& \sup_{r > 1 - \delta} \left(\sup_{|I| < \delta}\frac{1}{\omega(S(I))^\alpha}\int_{S(I)}|g'(z)|^2\omega^*(z)dA(z)\right.
\nonumber \\
&+& \left.\sup_{|I| < \delta}\frac{1}{\omega(S(I))^\alpha}\int_{S(I)}r^2|g'(rz)|^2\omega^*(z)dA(z)\right)
\nonumber \\
&=&\sup_{|I| < \delta}\left(\frac{1}{\omega(S(I))^\alpha}\int_{S(I)}|g'(z)|^2\omega^*(z)dA(z)\right)
\nonumber \\
&+& \sup_{r > 1 - \delta}\left(\sup_{|I| < \delta}\frac{1}{\omega(S(I))^\alpha}\int_{S(I)}r^2|g'(rz)|^2\omega^*(z)dA(z)\right).
\end{eqnarray}
Given an interval $I \subset \mathbb{T}$, let $e^{i\theta_0} \in I$ be the center point of $I$ and define a Carleson window $$S'(I) = \{re^{i\theta} \in \mathbb{D}: |\theta-\theta_0| < |I|, 1-2|I| \le r < 1\}.$$ Now $rS(I) \subset S'(I)$ for all $r \in ]1-\delta,1[,$ when $\delta$ is small enough. Also, it holds that $$\frac{\omega(S'(I))}{\omega(S(I))} \lesssim 1$$ for all $I \subset \mathbb{T}$ by the doubling property. Thus by the change of variables $u = rz$, we get
\begin{eqnarray*}
&&\frac{1}{\omega(S(I))^\alpha}\int_{S(I)}r^2|g'(rz)|^2\omega^*(z)dA(z)
\\
&=& \frac{1}{\omega(S(I))^\alpha}\int_{rS(I)}|g'(u)|^2\omega^*(u/r)dA(u)
\\
&\le& \left(\frac{\omega(S'(I))}{\omega(S(I))}\right)^\alpha \frac{1}{\omega(S'(I))^\alpha}\int_{S'(I)}|g'(u)|^2\omega^*(u)dA(u)
\\
&\lesssim& \frac{1}{\omega(S'(I))^\alpha}\int_{S'(I)}|g'(u)|^2\omega^*(u)dA(u)
\end{eqnarray*}
for all $r \in ]1-\delta,1[$ and consequently
\begin{eqnarray*}
&&\sup_{r > 1 - \delta}\left(\sup_{|I| < \delta}\frac{1}{\omega(S(I))^\alpha}\int_{S(I)}r^2|g'(rz)|^2\omega^*(z)dA(z)\right)
\\
&\lesssim& \sup_{|I| < \delta} \left(\frac{1}{\omega(S'(I))^\alpha}\int_{S'(I)}|g'(z)|^2\omega^*(z)dA(z)\right).
\end{eqnarray*}
Now the estimate \eqref{eq: distest1} becomes
\begin{eqnarray}
\label{eq: distest2}
\textup{dist}(g, C_0^\alpha(\omega^*))^2 &\lesssim& \sup_{|I| < \delta}\left(\frac{1}{\omega(S(I))^\alpha}\int_{S(I)}|g'(z)|^2\omega^*(z)dA(z)\right)
\nonumber \\
&+& \sup_{|I| < \delta} \left(\frac{1}{\omega(S'(I))^\alpha}\int_{S'(I)}|g'(z)|^2\omega^*(z)dA(z)\right).
\end{eqnarray}
Letting $\delta \to 0^+$ in \eqref{eq: distest2}, we get
$$\textup{dist}(g, C_0^\alpha(\omega^*))^2 \lesssim \limsup_{|I| \to 0}\left(\frac{1}{\omega(S(I))^\alpha}\int_{S(I)}|g'(z)|^2\omega^*(z)dA(z)\right).$$ The proof is complete.

\end{proof}

\section{ Norm Estimate}\label{s2}
Define  \begin{equation}\label{fap}f_{a,p}(z)=\frac{(1-|a|)^{\frac{\gamma+1}{p}}}{(1-\bar{a}z)^{\frac{\gamma+1}{p}}\omega(S(a))^{\frac{1}{p}}},\end{equation} where $\gamma=\beta(\omega)>0$ is the constant in Lemma 1.1 \cite{PR}. A simple computation shows that $\sup_{a\in \mathbb D}\|f_{a,p}\|_{A_\omega^p}\lesssim 1$, and $f_a(z)\to 0$ uniformly on compact subsets of $\mathbb D$ as $|a|\to 1$.

\begin{lemma}\label{Tgfa}
Let $0< p\leq q < \infty$, $s = 2\left(\frac{1}{p}-\frac{1}{q}\right)+1$, $\frac{1}{p}-\frac{1}{q}<1$, $\omega\in \widehat{D}$ and  $g\in \mathcal{C}^s(\omega^*)$. Then
\begin{eqnarray}\label{Tgfap}\limsup_{|a|\to 1}\|T_g(f_{a,p})\|_{A_\omega^q}\geq \limsup_{|a|\to 1}\left(\frac{\int_{S(a)}|g'(z)|^2\omega^*(z)dA(z)}{\omega(S(a))^s}\right)^{\frac{1}{2}}\end{eqnarray}
\end{lemma}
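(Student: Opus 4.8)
The plan is to estimate $\|T_g(f_{a,p})\|_{A^q_\omega}$ from below using the Littlewood--Paley type identity \eqref{equiv1} (with $p$ replaced by $q$), which gives, up to a constant,
\[
\|T_g(f_{a,p})\|_{A^q_\omega}^q \simeq \int_{\mathbb D}|T_g(f_{a,p})(z)|^{q-2}\,|(T_g f_{a,p})'(z)|^2\,\omega^*(z)\,dA(z) + \omega(\mathbb D)|T_g(f_{a,p})(0)|^q,
\]
and then to use $(T_g f)' = f\,g'$. Thus the integral term equals
\[
\int_{\mathbb D}|T_g(f_{a,p})(z)|^{q-2}\,|f_{a,p}(z)|^2\,|g'(z)|^2\,\omega^*(z)\,dA(z).
\]
First I would restrict the integral to the Carleson square $S(a)$. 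On $S(a)$ the test function is comparable to its ``size'', namely $|f_{a,p}(z)|\simeq \omega(S(a))^{-1/p}$ for $z\in S(a)$ (since $1-\bar a z\simeq 1-|a|$ there), so that $|f_{a,p}(z)|^2\simeq \omega(S(a))^{-2/p}$.

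The main obstacle is the factor $|T_g(f_{a,p})(z)|^{q-2}$ when $q\ne 2$: we need a pointwise lower bound for $|T_g(f_{a,p})(z)|$ on (a large portion of) $S(a)$. If $q\ge 2$ this factor has a nonnegative exponent, so naively dropping it loses, rather than gains — instead I would exploit that $T_g(f_{a,p})$ is, after subtracting a point value, an integral of $f_{a,p}g'$ and use that on $S(a)$ one can find a subregion where $|T_g(f_{a,p})|$ is bounded below by a constant times $\omega(S(a))^{-1/p}$ times the natural scale, by a standard argument comparing the increment of $T_g(f_{a,p})$ across $S(a)$ with $\int_{S(a)}|g'|^2\omega^*\,dA$; this is exactly the kind of estimate underlying the boundedness direction of Theorem~\ref{Tbound}. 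When $0<q<2$ the exponent $q-2<0$, so one instead needs an \emph{upper} bound $|T_g(f_{a,p})(z)|\lesssim$ (something), which follows from $g\in\mathcal C^s(\omega^*)$ via Lemma~\ref{distance} / the growth estimate $M_\infty(g',r)\lesssim (\omega^*(r))^{(s-1)/2}/(1-r)$ together with $\|f_{a,p}\|_{A^p_\omega}\lesssim 1$ and the pointwise growth of $A^p_\omega$-functions. Either way one arrives at
\[
\|T_g(f_{a,p})\|_{A^q_\omega}^q \gtrsim \omega(S(a))^{-q/p}\,\bigl(\omega(S(a))\bigr)^{\gamma}\int_{S(a)}|g'(z)|^2\omega^*(z)\,dA(z),
\]
where the power of $\omega(S(a))$ coming from the $|T_g(f_{a,p})|^{q-2}$ factor is determined by the scale $(1-|a|)^{q-2}\simeq$ powers of $\omega(S(a))$ via $\omega\in\widehat D$ and $\omega(S(a))\simeq\omega^*(a)\simeq(1-|a|)\omega(a)$-type comparisons; tracking this exponent carefully is where the identity $s=2(\tfrac1p-\tfrac1q)+1$ must fall out.

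Taking $q$-th roots then yields
\[
\|T_g(f_{a,p})\|_{A^q_\omega} \gtrsim \left(\frac{\int_{S(a)}|g'(z)|^2\omega^*(z)\,dA(z)}{\omega(S(a))^{s}}\right)^{1/2},
\]
and passing to $\limsup_{|a|\to1}$ on both sides gives \eqref{Tgfap}. The only remaining point is to make sure the ``$+\omega(\mathbb D)|T_g(f_{a,p})(0)|^q$'' term does not interfere: since $T_g(f_{a,p})(0)=0$ by definition of $T_g$, that term simply vanishes, which is a convenient feature of this particular family of test functions. I expect the delicate step to be the pointwise control of $T_g(f_{a,p})$ on $S(a)$ and the exact bookkeeping of the $\omega(S(a))$ exponents; the rest is routine once the scale comparisons for $\omega\in\widehat D$ are in place.
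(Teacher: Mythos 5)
Your outline identifies the right starting point (the identity \eqref{equiv1} applied to $T_g f_{a,p}$, whose value at $0$ vanishes, together with the localization $|f_{a,p}|\simeq\omega(S(a))^{-1/p}$ on $S(a)$), but the way you propose to dispose of the factor $|T_g(f_{a,p})(z)|^{q-2}$ does not work, and that factor is precisely the crux of the lemma. For $q\ge 2$ there is no ``standard argument'' producing a subregion of $S(a)$ on which $|T_g(f_{a,p})|$ is bounded below in terms of $\int_{S(a)}|g'|^2\omega^*\,dA$: the increment of $T_g(f_{a,p})$ across $S(a)$ is a line integral of $f_{a,p}g'$, which can be small even when the area integral of $|g'|^2\omega^*$ is large (let $g'$ oscillate). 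The paper's actual device in the case $p=q>2$ is to put the excess exponent on $f_{a,p}$ rather than on $T_gf_{a,p}$: one rewrites $\int_{S(a)}|f_{a,p}|^p|g'|^2\omega^*\,dA$ in non-tangential form via Fubini, dominates $|f_{a,p}(z)|^{p-2}$ for $z\in\Gamma(u)$ by $N(f_{a,p})(u)^{p-2}$, applies H\"older with exponents $\frac{p}{p-2}$ and $\frac{p}{2}$, and recognizes the second factor as $\|T_gf_{a,p}\|_{A_\omega^p}^2$ through the square-function norm \eqref{equiv2}, while Lemma \ref{N(f)} controls the first factor. Your proposal contains no substitute for this step. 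Moreover, your bookkeeping rests on the claim that $(1-|a|)^{q-2}$ is comparable to a power of $\omega(S(a))$; for a general $\omega\in\widehat{D}$ this is false (it holds only for standard-type weights), so the exponent $s$ cannot ``fall out'' that way.

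Two further cases are not covered. For $p=q<2$ the exponent is negative and the paper needs a genuinely different argument: a duality and H\"older scheme involving the sublinear operator $S_g(\psi)(u)=\int_{\Gamma(u)}|\psi|^2|g'|^2\,dA$ and the maximal function $M_\omega$, which produces an inequality containing the Carleson quantity $\sup_{b}\mu_a(S(b))/\omega(S(b))$ (with $d\mu_a=\chi_{S(a)}|g'|^2\omega^*\,dA$) on both sides and is resolved only by a careful passage to the limit superior as in \eqref{eq: limsupestimate}. For $p<q$ the paper abandons the Littlewood--Paley route altogether: it derives the pointwise bound $\|T_gf_{a,p}\|_{A_\omega^q}\gtrsim|g'(a)|\,\omega^*(a)^{\frac1q-\frac1p}(1-|a|)$ from Cauchy-type estimates on $M_\infty$ and $M_q$ and then invokes Lemma \ref{littleg} to convert this into the Carleson-square quantity; note also that the growth estimate $M_\infty(g',r)\lesssim(\omega^*(r))^{(s-1)/2}/(1-r)$ you invoke is available only for $s>1$, i.e.\ only when $p<q$, so it cannot help when $p=q$. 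As written, the proposal is a plan in which the two hardest steps are missing.
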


\begin{proof}
We split the analysis into two cases.

{\it Case 1}- Assume $p=q$. For this, we divide the proof of the claim (\ref{Tgfap}) into three sub-cases.

{\it Sub-case 1}: $p>2$.
We may assume that $|a| > 1/2$.
For $z\in S(a)$, it is easy to see that $f_{a,p}(z)\simeq \omega(S(a))^{-\frac{1}{p}}$, and so
\begin{equation}\label{eq: el1}
\int_{S(a)}|f_{a,p}|^p|g'(z)|^2\omega^*(z) dA(z)
\gtrsim \frac{1}{\omega(S(a))}\int_{S(a)}|g'(z)|^2\omega^*(z)dA(z).
\end{equation}
Furthermore, by applying Fubini's theorem, H\"{o}lder's inequality, Lemma \ref{N(f)} and (4),  we obtain
\begin{eqnarray*}
&&\int_{S(a)}|f_{a,p}|^p|g'(z)|^2\omega^*(z)dA(z)\\
&&\lesssim \int_{\mathbb D}|f_{a,p}|^p|g'(z)|^2\int_{T(z)}\omega(u)dA(u)dA(z)\\
&&=\int_{\mathbb D}\int_{\Gamma(u)}|f_{a,p}(z)|^p|g'(z)|^2dA(z)\omega(u)dA(u)\\
&&\leq\int_{\mathbb D} N(f_{a,p})(u)^{p-2}\int_{\Gamma(u)}|f_{a,p}(z)|^2|g'(z)|^2dA(z)w(u)dA(u)\\
&&\leq \left(\int_{\mathbb D}N(f_{a,p})(u)^p\omega(u)dA(u)\right)^{\frac{p-2}{p}}\left(\int_{\mathbb D}\left(\int_{\Gamma(u)}|f_{a,p}(z)|^2|g'(z)|^2dA(z)\right)^{\frac{p}{2}}\omega(u)dA(u)\right)^{\frac{2}{p}}\\
&&\lesssim \|f_{a,p}\|_{A_\omega^p}^{p-2}\|T_g(f_{a,p})\|_{A_\omega^p}^2\lesssim \|T_g(f_{a,p})\|_{A_\omega^p}^2.
\end{eqnarray*}
This last estimate, along with \eqref{eq: el1} gives $$\|T_gf_{a,p}\|_{A_\omega^p}^2\gtrsim \frac{1}{\omega(S(a))}\int_{S(a)}|g'(z)|^2\omega^*(z)dA(z).$$

{\it Sub-case 2}: $p=2$. The desired estimate follows from (\ref{equiv3}) immediately.

{\it Sub-case 3}: $0< p<2$. Let $1<\alpha,\beta<\infty$ be such that $\beta/\alpha=p/2<1$, $|a| > 1/2$ and let $\alpha'$ and $\beta'$ be the conjugate indexes of $\alpha$ and $\beta$ respectively. It follows from Fubini's theorem , H\"{o}lder's inequality, and (\ref{equiv2}) that
\begin{eqnarray}\label{necep2}
&&\frac{1}{\omega(S(a))^{\frac{2}{p}}}\int_{S(a)}|g'(z)|^2\omega^*(z)dA(z)\nonumber\\
&& \simeq\int_{S(a)}|g'(z)|^2|f_{a,p}(z)|^2\omega^*(z)dA(z)\nonumber\\
&& \simeq \int_{\mathbb D}\left(\int_{S(a)\cap\Gamma(u)}|g'(z)|^2|f_{a,p}(z)|^2dA(z)\right)^{\frac{1}{\alpha}+\frac{1}{\alpha'}}\omega(u)dA(u)\nonumber\\
&& \leq \left(\int_{\mathbb D}\left(\int_{\Gamma(u)}|g'(z)|^2|f_{a,p}(z)|^2dA(z)\right)^{\frac{\beta}{\alpha}}\omega(u)dA(u)\right)^{\frac{1}{\beta}}\\
&& \ \ \cdot \left(\int_{\mathbb D}\left(\int_{\Gamma(u)\cap S(a)}|g'(z)|^2|f_{a,p}(z)|^2dA(z)\right)^{\frac{\beta'}{\alpha'}}\omega(u)dA(u)\right)^{\frac{1}{\beta'}}\nonumber\\
&&\simeq \|T_g(f_{a,p})\|_{A_\omega^p}^{\frac{p}{\beta}} \omega(S(a))^{-\frac{2}{p\alpha'}}\|S_g(\chi_{S(a)})\|_{L_\omega^{\frac{\beta'}{\alpha'}}}^{\frac{1}{\alpha'}}  \nonumber
\end{eqnarray}
for $|a| > 1/2,$ 
where $$S_g(\psi)(u)=\int_{\Gamma(u)}|\psi(z)|^2|g'(z)|^2dA(z), \quad u\in \mathbb D\setminus \{0\},$$ for any bounded function $\psi$ on $\mathbb D$. From $1<\beta<\alpha$, we obtain $\frac{\beta'}{\alpha'}>1$ with the conjugate exponent $\left(\frac{\beta'}{\alpha'}\right)'=\frac{\beta(\alpha-1)}{\alpha-\beta}>1$. Thereby
\begin{equation}
\label{Sg}
\|S_g(\chi_{S(a)})\|_{L_\omega^{\frac{\beta'}{\alpha'}}}=\sup_{\|f\|_{L_\omega^{\frac{\beta(\alpha-1)}{\alpha-\beta}}}\leq 1}\left|\int_{\mathbb D}f(u)S_g(\chi_{S(a)})(u)\omega(u)dA(u)\right|.
\end{equation}
Combining Fubini's theorem, H\"{o}lder's inequality, and Lemma \ref{eCarle}, we conclude that
\begin{eqnarray*}
&& \left|\int_{\mathbb D}f(u)S_g(\chi_{S(a)})(u)\omega(u)dA(u)\right|\\
&&\leq \int_{\mathbb D}|f(u)|\int_{\Gamma(u)\cap S(a)}|g'(z)|^2dA(z)\omega(u)dA(u)\\
&& =\int_{S(a)}|g'(z)|^2\int_{T(z)}|f(u)|\omega(u)dA(u)dA(z)\\
&& \lesssim \int_{S(a)}M_\omega(|f|)(z)|g'(z)|^2\omega^*(z)dA(z)\\
&&\leq \left(\int_{S(a)}|g'(z)|^2\omega^*(z)dA(z)\right)^{\frac{\alpha'}{\beta'}}
\left(\int_{S(a)}M_\omega(|f|)(z)^{\left(\frac{\beta'}{\alpha'}\right)'}|g'(z)|^2\omega^*(z)dA(z)\right)^{1-\frac{\alpha'}{\beta'}}\\
&& \leq \left(\int_{S(a)}|g'(z)|^2\omega^*(z)dA(z)\right)^{\frac{\alpha'}{\beta'}}\left(\sup_{b\in \mathbb D}\frac{\mu_a(S(b))}{\omega(S(b))}\right)^{1-\frac{\alpha'}{\beta'}}\|f\|_{L_\omega^{\left(\frac{\beta'}{\alpha'}\right)'}},
\end{eqnarray*} where $d\mu_a(z)=\chi_{S(a)}(z)|g'(z)|^2\omega^*(z)dA(z)$. The last estimate, along with (\ref{necep2}) and (\ref{Sg})
 gives
 \begin{eqnarray*}
 &&\frac{\int_{S(a)}|g'(z)|^2\omega^*(z)dA(z)}{\omega(S(a))^{\frac{2}{p}}}\lesssim \|T_g(f_{a,p})\|_{A_\omega^p}^{\frac{p}{\beta}}\\
 &&\cdot\frac{\left(\int_{S(a)}|g'(z)|^2\omega^*(z)dA(z)\right)^{\frac{1}{\beta'}}}
 {\omega(S(a))^{\frac{2}{p}\cdot\frac{1}{\alpha'}}}\left(\sup_{b\in \mathbb D}\frac{\mu_a(S(b))}{\omega(S(b))}\right)^{(1-\frac{\alpha'}{\beta'})\cdot\frac{1}{\alpha'}},
 \end{eqnarray*} so that
 \begin{eqnarray*}
 \left(\frac{\int_{S(a)}|g'(z)|^2\omega^*(z)dA(z)}{\omega(S(a))}\right)^{\frac{1}{\beta}}\lesssim \|T_g(f_{a,p})\|_{A_\omega^p}^{\frac{p}{\beta}}\left(\sup_{b\in \mathbb D}\frac{\mu_a(S(b))}{\omega(S(b))}\right)^{\frac{1}{\beta}(1-\frac{\beta}{\alpha})},
 \end{eqnarray*} from which we obtain
 \begin{eqnarray}\label{necep1}&&\frac{\int_{S(a)}|g'(z)|^2\omega^*(z)dA(z)}{\omega(S(a))}\lesssim \|T_g(f_{a,p})\|_{A_\omega^p}^p\left(\sup_{b\in \mathbb D}\frac{\mu_a(S(b))}{\omega(S(b))}\right)^{1-\frac{p}{2}}\nonumber\\
 &&= \|T_g(f_{a,p})\|_{A_\omega^p}^p\left(\sup_{b: S(b)\subseteq S(a)}\frac{\mu_a(S(b))}{\omega(S(b))}\right)^{1-\frac{p}{2}}.
 \end{eqnarray}  It is easy to see that
 \begin{eqnarray*}
 &&\limsup_{|a|\to 1} \sup_{b: S(b)\subseteq S(a)}\frac{\mu_a(S(b))}{\omega(S(b))}=\limsup_{|a|\to 1} \sup_{b: S(b)\subseteq S(a)}\frac{\int_{S(b)}|g'(z)|^2\omega^*(z)dA(z)}{\omega(S(b))}\\
 &&=\limsup_{|a|\to 1}\frac{\int_{S(a)}|g'(z)|^2\omega^*(z)dA(z)}{\omega(S(a))}.
 \end{eqnarray*}
 The last equality and (\ref{necep1}) yield
 \begin{eqnarray}\label{eq: limsupestimate}&&\left(\limsup_{|a|\to 1}\frac{\int_{S(a)}|g'(z)|^2\omega^*(z)dA(z)}{\omega(S(a))}\right)^{\frac{p}{2}} \nonumber \\
 &&=\limsup_{|a|\to 1}\frac{\frac{\int_{S(a)}|g'(z)|^2\omega^*(z)dA(z)}{\omega(S(a))}}{\left(\sup_{b: S(b)\subseteq S(a)}\frac{\int_{S(a)}|g'(z)|^2\omega^*(z)dA(z)}{\omega(S(a))}\right)^{1-\frac{p}{2}}}
 \lesssim \limsup_{|a|\to 1}\|T_g(f_{a,p})\|_{A_\omega^p}^p.\end{eqnarray} Therefore
 $$\limsup_{|a|\to 1}\|T_g(f_{a,p})\|_{A_\omega^p}
 \gtrsim \limsup_{|a|\to 1}\left(\frac{\int_{S(a)}|g'(z)|^2\omega^*(z)dA(z)}{\omega(S(a))}\right)^{\frac{1}{2}}.$$
{\it Case 2}- Assume $p<q$.

 For all $h\in A_\omega^q$, we have
$$\|h\|_{A_\omega^q}^q\geq \int_{\mathbb D\setminus D(0,r)}|h(z)|^p \omega(z)dA(z)\gtrsim M_p^p(r,h)\int_r^1\omega(s)ds,\quad r\geq \frac{1}{2},$$
where $M_p(r,h) = \left( \frac{1}{2\pi} \int_0^{2\pi}|h(re^{i\theta})|^p d\theta\right)^{1/p}$. Then $$M_q^q(r,T_gf_{a,p})\lesssim \frac{\|T_gf_{a,p}\|_{A_\omega^q}^q}{\int_r^1\omega(s)ds},\quad r\geq \frac{1}{2}.$$
By Cauchy's integral formula, we  get two  well-known estimates as $M_\infty(r,f')\lesssim M_\infty(\rho,f)/(1-r)$ and $M_\infty(r,f)\lesssim M_q(\rho,f)(1-r)^{-1/q}$, $\rho=(1+r)/2$.
Then
\begin{eqnarray*}
&&|g'(a)|\simeq \omega^*(a)^{\frac{1}{p}}|(T_gf_{a,p})'(a)|
\lesssim \omega^*(a)^{\frac{1}{p}}\frac{M_\infty\left(\frac{1+|a|}{2},T_gf_{a,p}\right)}{1-|a|}\\
&&\lesssim \omega^*(a)^{\frac{1}{p}}\frac{M_q\left(\frac{3+|a|}{4},T_gf_{a,p}\right)}{(1-|a|)^{1+\frac{1}{q}}}
\lesssim \omega^*(a)^{\frac{1}{p}}\frac{\|T_gf_{a,p}\|_{A_\omega^q}}{(1-|a|)^{1+\frac{1}{q}}\left(\int_{\frac{3+|a|}{4}}^1\omega(s)ds\right)^{\frac{1}{q}}}\\
&&\simeq \frac{\omega^*(a)^{\frac{1}{p}-\frac{1}{q}}\|T_gf_{a,p}\|_{A_\omega^q}}{1-|a|}.
\end{eqnarray*}
The last inequality is due to $\omega^*(a)\simeq (1-|a|)\int_{|a|}^1\omega(s)ds\lesssim (1-|a|)\int_{\frac{3+|a|}{4}}^1\omega(s)ds$, for $|a|>\frac{1}{2}$.
Thus $$\|T_gf_{a,p}\|_{A_\omega^q}\gtrsim |g'(a)|\omega^*(a)^{\frac{1}{q}-\frac{1}{p}}(1-|a|), |a|>\frac{1}{2}$$ and so  Lemma \ref{littleg} yields (\ref{Tgfap}).

\end{proof}

\begin{proof}[Proof of Theorem \ref{norm}]

Clearly, $\|g\|_{*,\alpha,\omega}\simeq B$ and $\|g\|_{*,\alpha,\omega}\simeq C$ follow by the proof of Lemma 5.3 and Proposition 4.7 in \cite{PR},
respectively. The proof of Lemma \ref{Tgfa} also deduces that $\sup_{a\in \mathbb D}\|T_gf_{a,p}\|_{A_\omega^q}\gtrsim \|g\|_{*,\alpha,\omega} $. So $\|T_g\|_{A_\omega^p\rightarrow A_\omega^q}\gtrsim \|g\|_{*,\alpha,\omega}$. It remains to prove $\|T_g\|_{A_\omega^p\rightarrow A_\omega^q}\lesssim \|g\|_{*,\alpha,\omega}$.

Notice
\begin{equation}\label{e1}\|T_g\|_{A_\omega^p\rightarrow A_\omega^q}^q=\sup_{\|f\|_{A_\omega^p}\leq 1}\|T_g(f)\|_{A_\omega^q}^q.\end{equation}
Two cases have to be analyzed.

{\it Case 1}-Assume $q\geq 2$. Applying  (\ref{equiv1}) and  H\"{o}lder's inequality,  for $q>2$ we get
\begin{eqnarray*}
&&\|T_gf\|_{A_\omega^q}^q\simeq \int_{\mathbb D}|T_gf(z)|^{q-2}|f(z)|^2|g'(z)|^2\omega^*(z)dA(z)\\
&&\leq\left(\int_{\mathbb D}|T_gf(z)|^{\frac{2q-2p+pq}{p}}|g'(z)|^2\omega^*(z)dA(z)\right)^{\frac{p(q-2)}{2q-2p+pq}}\\
&&\ \ \ \ \cdot\left(\int_{\mathbb D}|f(z)|^{\frac{2q-2p+pq}{q}}|g'(z)|^2\omega^*(z)dA(z)\right)^{\frac{2q}{2q-2p+pq}},
\end{eqnarray*}
whence \begin{equation}\label{q>2}
\|T_gf\|_{A_\omega^q}^q\lesssim \mathsf{U}^{\frac{p(q-2)}{2q-2p+pq}}\mathsf{V}^{\frac{2q}{2q-2p+pq}},
\end{equation}
where \begin{equation}\label{UV}
\begin{cases}
\mathsf{U}=\int_{\mathbb D}|T_gf(z)|^\frac{2q-2p+pq}{p}|g'(z)|^2\omega^*(z)dA(z);\\
\mathsf{V}=\int_{\mathbb D}|f(z)|^\frac{2q-2p+pq}{q}|g'(z)|^2\omega^*(z)dA(z)
\end{cases}
\end{equation}
 Noticing that (\ref{q>2}) is also true for $q=2$. We have to control $\mathsf{U}$ and $\mathsf{V}$ from above. To do so, set $d\mu(z)=|g'(z)|^2\omega^*(z)dA(z)$.
The assumption $g\in \mathcal C^\alpha(\omega^*)$  and Theorem \ref{Tbound} yield the boundedness of $T_g: A_\omega^p \to A_\omega^q$. Moreover, Lemma \ref{eCarle} and the fact that $\alpha=2\left(\frac{1}{p}-\frac{1}{q}\right)+1=\left(\frac{2q-2p+pq}{p}\right)/q=\left(\frac{2q-2p+pq}{q}\right)/p$ ensure
$d\mu$ is a $\frac{2q-2p+pq}{p}$-Carleson measure for $A_\omega^q$ and also a $\frac{2q-2p+pq}{q}$-Carleson measure for $A_\omega^p$.
Consequently,  the inequality (\ref{eCarnorm}) is applied to deduce that
\begin{eqnarray}\label{eu}\mathsf{U}\lesssim \left(\sup_{I\subseteq \mathbb T}\frac{\mu(S(I))}{\omega(S(I))^\alpha}\right)\|T_gf\|_{A_\omega^q}^{\frac{2q-2p+pq}{p}},
\end{eqnarray}
and
\begin{equation}\label{ev}\mathsf{V}\lesssim \left(\sup_{I \subseteq \mathbb T }\frac{\mu(S(I))}{\omega(S(I))^\alpha}\right)\|f\|_{A_\omega^p}^{\frac{2q-2p+pq}{q}}.\end{equation}
A combination of (\ref{q>2}), (\ref{eu}) and (\ref{ev}) gives when $n\to \infty$ that
\begin{eqnarray*}
&&\|T_gf\|_{A_\omega^q}^q\lesssim \left(\sup_{I\subseteq \mathbb T}\frac{\mu(S(I))}{\omega(S(I))^\alpha}\right)^{\frac{p(q-2)}{2q-2p+pq}}\|T_gf\|_{A_\omega^q}^{q-2}\\
&&\ \ \ \ \ \ \ \ \cdot\left(\sup_{I \subseteq \mathbb T}\frac{\mu(S(I))}{\omega(S(I))^\alpha}\right)^{\frac{2q}{2q-2p+pq}}\|f\|_{A_\omega^p}^2\\
&&=\left(\sup_{I \subseteq \mathbb T}\frac{\mu(S(I))}{\omega(S(I))^\alpha}\right) \|T_gf\|_{A_\omega^q}^{q-2}\|f\|_{A_\omega^p}^2.
\end{eqnarray*}
It follows that $$\|T_g\|_{A_\omega^p\rightarrow A_\omega^q}=\sup_{\|f\|_{A_\omega^p}\leq 1}\|T_gf\|_{A_\omega^q}\lesssim \left(\sup_{I \subseteq\mathbb T}\frac{\mu(S(I))}{\omega(S(I))^\alpha}\right)^{\frac{1}{2}}.$$

{\it Case 2}-Assume $0< p\leq q<2$. From the equation (\ref{equiv2}), H\"{o}lder's inequality,   Fubini's theorem and Lemma \ref{N(f)} it follows that
\begin{eqnarray*}
&&\|T_gf\|_{A_\omega^q}^q\simeq \int_{\mathbb D}\left(\int_{\Gamma(u)}|f(z)|^2|g'(z)|^2dA(z)\right)^{\frac{q}{2}}\omega(u)dA(u)\\
&&\leq\int_{\mathbb D}N(f)(u)^{\frac{p(2-q)}{2}}\left(\int_{\Gamma(u)}|f(z)|^{2-\frac{2p}{q}+p}|g'(z)|^2dA(z)\right)^{\frac{q}{2}}\omega(u)dA(u)\\
&&\leq\left(\int_{\mathbb D}N(f)(u)^p\omega(u)dA(u)\right)^{\frac{2-q}{2}}\\
&&\cdot
\left(\int_{\mathbb D}\int_{\Gamma(u)}|f(z)|^{2-\frac{2p}{q}+p}|g'(z)|^2dA(z)\omega(u)dA(u)\right)^{\frac{q}{2}}\\
&&= \|N(f)\|_{A_\omega^p}^{\frac{p(2-q)}{2}}\left(\int_{\mathbb D}|f(z)|^{2-\frac{2p}{q}+p}|g'(z)|^2\omega(T(z))dA(z)\right)^{\frac{q}{2}}\\
&&\simeq\|f\|_{A_\omega^p}^{\frac{p(2-q)}{2}}\mathsf{V}^{\frac{q}{2}},
\end{eqnarray*} where $\mathsf{V}$ is defined in (\ref{UV}). 
Now (\ref{ev}) ensures that
\begin{eqnarray*}&&\|T_gf\|_{A_\omega^q}^q\lesssim \|f\|_{A_\omega^p}^{\frac{p(2-q)}{2}}\left(\left(\sup_{I\subseteq \mathbb T}\frac{\mu(S(I))}{\omega(S(I))^\alpha}\right)\|f\|_{A_\omega^p}^{\frac{2q-2p+pq}{q}}\right)^{\frac{q}{2}}\\
&&= \left(\sup_{I\subseteq\mathbb T}\frac{\mu(S(I))}{\omega(S(I))^\alpha}\right)^{\frac{q}{2}}\|f\|_{A_\omega^p}^q. \end{eqnarray*} Consequently we get that
 $$\sup_{\|f\|_{A_\omega^p}\leq 1}\|T_gf\|_{A_\omega^q}\lesssim \left(\sup_{I\subseteq \mathbb T}\frac{\mu(S(I))}{\omega(S(I))^\alpha}\right)^{\frac{1}{2}},$$ and then
$$\|T_g\|_{A_\omega^p\rightarrow A_\omega^q}\lesssim \|g\|_{*,\alpha,\omega}.$$
The proof is complete.
\end{proof}

\section{ Essential Norm and Weak Compactness}\label{s2}
\begin{proof}[Proof of Theorem \ref{essnorm}]
Lemma 5.3 in \cite{PR} and Lemma \ref{littleg} show that $B\simeq C$ and $B\simeq D$. $A\simeq B$ follows by Lemma \ref{distance}.

To prove $\|T_g\|_{e,A_\omega^p\rightarrow A_\omega^q}\lesssim A$, observe that for any $h\in C_0^\alpha(\omega^*)$, by Theorem \ref{norm},
$$\|T_g\|_{e,A_\omega^p\rightarrow A_\omega^q}\lesssim \|T_g-T_h\|_{A_\omega^p\rightarrow A_\omega^q}=\|T_{g-h}\|_{A_\omega^p\rightarrow A_\omega^q}\lesssim \|g-h\|_{C^\alpha(\omega^*)},$$
whence, taking infimum over $h$, we obtain
$\|T_g\|_{e,A_\omega^p\rightarrow A_\omega^q}\lesssim A$.

Next, we turn to establishing
\begin{equation}\label{lower}\|T_g\|_{e,A_\omega^p\rightarrow A_\omega^q}\gtrsim B=\left(\limsup_{|I|\to0}\frac{\int_{S(I)}|g'(z)|^2\omega^*(z)dA(z)}{\omega(S(I))^\alpha}\right)^{1/2}.\end{equation}
Given a subarc $I$ of $\mathbb T$, consider $f_{a,p}$ defined in (\ref{fap}),
where $a=(1-|I|)\zeta$ and $\zeta$ is the center point of $I$. Then $S(a)=S(I)$.
 For the moment fix
a compact operator $K: A_\omega^p\to A_\omega^q$. Then
$$\lim_{|a|\to 1}\|Kf_{a,p}\|_{A_\omega^q}=0,$$
and so we find that \begin{eqnarray*}
&&\|T_g-K\|\gtrsim \limsup_{|a|\to 1}\|(T_g-K)f_{a,p}\|_{A_\omega^q}\\
&&\gtrsim \limsup_{|a|\to1}(\|T_gf_{a,p}\|_{A_\omega^q}-\|Kf_{a,p}\|_{A_\omega^q})\\
&&\gtrsim \limsup_{|a|\to 1}\|T_gf_{a,p}\|_{A_\omega^q}.
\end{eqnarray*}
Upon taking the infimum of both sides  of this inequality over all compact operators $K: A_\omega^p\to A_\omega^q$, it follows from Lemma \ref{Tgfa} that
\begin{equation}\label{ele}\|T_g\|_{e,A_\omega^p\rightarrow A_\omega^q}\gtrsim \limsup_{|a|\to 1}\|T_gf_{a,p}\|_{A_\omega^q}\gtrsim B.\end{equation}
The proof is complete.
\end{proof}

Next, we consider the weak essential norm of $T_g$ on $A^1_\omega$. Recall that the notion of weak compactness of an operator is non-trivial only on non-reflexive spaces. The non-reflexivity of $A^1_\omega$ can be shown e.g.\ by constructing an isomorphic copy of the sequence space $\ell^1$ inside $A^1_\omega$. For this one uses suitable normalized functions so that the closed subspace spanned by these functions is isomorphic to $\ell^1.$ One may use e.g.\ functions
\begin{displaymath}
f_{r_k, \gamma}(z) = \frac{g_{r_k, \gamma}(z)}{\| g_{r_k, \gamma} \|_{A^1_\omega}}, \quad z \in \mathbb{D},
\end{displaymath}
where $r_k \in (0,1),\, r_k \to 1$ sufficiently fast, $g_{r_k, \gamma}(z) = \left( \frac{1-r_k}{1-r_k z}\right)^\gamma$ and $\gamma > 0$. The functions $f_{r_k, \gamma}$ have the properties
\begin{enumerate}
\item[(i)] $\int_{\mathbb{D} \setminus D(1,\varepsilon)} |f_{r_k, \gamma}|\omega dA \to 0,$ as $k \to \infty$ \textup{ for all $\varepsilon > 0$;}

\item[(ii)] $\int_{\mathbb{D} \cap D(1,\delta)} |f_{r_k, \gamma}|\omega dA \to 0,$ as $\delta \to 0$ \textup{ for all $k = 1,2,\ldots$.}
\end{enumerate}
The condition (i) follows from the doubling property of $\widehat{\omega}$ and the choice for the parameter $\gamma$ to be large enough. The condition (ii) is evident. These properties and the fact that $r_k \to 1$ sufficiently fast ensure that the map
\begin{displaymath}
U \colon \ell^1 \to A^1_\omega,\quad U((\alpha_k)_{k = 1}^\infty) = \sum_{k = 1}^\infty \alpha_k f_{r_k, \gamma}
\end{displaymath}
is an isomorphism onto its image.

In order to deal with the weak essential norm of $T_g$ on $A^1_\omega$ we utilize the classical Dunford-Pettis criterion (see e.g.\ \cite[Theorem 5.2.9]{AK}), which states that a bounded set $S \subset L^1_\mu,$ (where the measure $\mu$ is a probability measure) is relatively compact in the weak topology of  $L^1_\mu$ if and only if it is equi-integrable, i.e.,
\begin{displaymath}
\lim_{\mu(A) \to 0}\sup_{f \in S} \int_A |f| d\mu = 0.
\end{displaymath}
The application of this criterion in our setting is based on the next lemma.

\begin{lemma} \label{integrallimit}
 Let $\omega\in \widehat{D}$. Suppose $g\in C^1(\omega^*)$.
For all non-zero $a\in \mathbb D$, let $J(a)=\{re^{i\theta}: |\theta-\arg a|<(1-|a|)^{1/6},1-|a|<r<1\}$ and
\[
f_a(z)=f_{a,1}(z)=\frac{(1-|a|^2)^{\gamma+1}}{(1-\overline{a}z)^{\gamma+1}\omega(S(a))},
\]
where $\gamma$ is large enough so that
$$\lim_{|a|\to 1}\frac{(1-|a|)^{\frac{5}{6}\gamma-\frac{1}{6}}}{\int_{|a|}^1\omega(s)ds}=0.$$ Then
$$\lim_{|a|\to 1}\int_{\mathbb D\setminus J(a)}|T_gf_a(z)|\omega(z)dA(z)=0.$$
\end{lemma}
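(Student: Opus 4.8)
The plan is to estimate $T_g f_a$ on the ``bad'' set $\mathbb D \setminus J(a)$ by splitting it into the part far from the circle and the part near the circle but outside the angular window $J(a)$, and to control each piece using an equivalent norm for $A^1_\omega$ together with the size estimates for $f_a$ and $g'$. First I would record the pointwise bounds we have available: since $g \in C^1(\omega^*)$, Proposition 4.7 of \cite{PR} (valid for $\widehat D$) gives $M_\infty(g',r) \lesssim (1-r)^{-1}$, i.e.\ $|g'(z)| \lesssim (1-|z|)^{-1}$; and from the definition of $f_a$ one has $|f_a(z)| \lesssim (1-|a|)^{\gamma+1}/(|1-\bar a z|^{\gamma+1}\omega(S(a)))$, with $\omega(S(a)) \simeq (1-|a|)\widehat\omega(|a|)$.

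Next I would pick a convenient equivalent norm. Using \eqref{equiv1} with $p=1$,
\[
\|T_g f_a\|_{A^1_\omega} \simeq \int_{\mathbb D} |T_g f_a(z)|^{-1}|f_a(z)|^2|g'(z)|^2 \omega^*(z)\,dA(z) + \text{const},
\]
which is awkward because of the $|T_g f_a|^{-1}$ factor; instead it is cleaner to work with the tent-space form \eqref{equiv2} with $n=1$, or simply to estimate $|T_g f_a(z)|$ directly by integrating $f_a g'$ along the radius. Indeed $|T_g f_a(z)| \le \int_0^{|z|} |f_a(t e^{i\arg z})|\,|g'(t e^{i\arg z})|\,dt$, and on $\mathbb D \setminus J(a)$ one exploits that either $|1-\bar a z|$ is bounded below (giving a gain of a large power of $1-|a|$ from $f_a$) or $1-|z|$ is bounded below (so the whole integrand is small). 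The key book-keeping is: on $\mathbb D \setminus D(1,\varepsilon)$ for fixed $\varepsilon$, $|1-\bar a z| \gtrsim \varepsilon$ once $|a|$ is close to $1$, so $|f_a(z)| \lesssim (1-|a|)^{\gamma+1}/(\varepsilon^{\gamma+1}\omega(S(a)))$, and integrating against $\omega$ gives a bound $\lesssim (1-|a|)^{\gamma+1}/(\varepsilon^{\gamma+1}\omega(S(a))) \cdot (\text{log factor from }g')$, which tends to $0$; while on $D(1,\varepsilon) \setminus J(a)$ the angular separation $|\arg z - \arg a| \ge (1-|a|)^{1/6}$ forces $|1-\bar a z| \gtrsim (1-|a|)^{1/6}$, so $|f_a(z)| \lesssim (1-|a|)^{\gamma+1}/((1-|a|)^{(\gamma+1)/6}\omega(S(a))) = (1-|a|)^{\frac56(\gamma+1)}/\omega(S(a))$; combining with $|g'(z)| \lesssim (1-|z|)^{-1}$, the $dt$-integral along the radius contributes a factor $\lesssim \log\frac{1}{1-|z|}$, and then integrating $\omega(z)\,dA(z)$ over $D(1,\varepsilon)$ produces at worst $(1-|a|)^{\frac56(\gamma+1)}\widehat\omega(|a|)^{-1}(1-|a|)^{-1}$ up to logarithmic factors, which is $\lesssim (1-|a|)^{\frac56\gamma - \frac16}/\widehat\omega(|a|)$ and hence $\to 0$ by the hypothesis on $\gamma$.

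The main obstacle I expect is the near-circle-but-outside-window region $D(1,\varepsilon) \setminus J(a)$: here one must be careful that integrating $|f_a|\,|g'|$ along the \emph{whole} radius from $0$ to $|z|$ (not just near $z$) does not destroy the gain, because for $t$ small $|f_a(te^{i\arg z})|$ is merely $\lesssim (1-|a|)^{\gamma+1}/\omega(S(a))$ (no denominator gain) while $|g'|$ is bounded there, so that part of the $t$-integral is harmless; the delicate range is $t$ close to $|z|$ where $|g'|$ blows up, and one needs the angular lower bound $|1-\bar a(te^{i\arg z})| \gtrsim (1-|a|)^{1/6}$ to hold uniformly in that range, which it does since the angular coordinate is fixed. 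A secondary point is to make the logarithmic factors $\log\frac{1}{1-|z|}$ genuinely harmless: since $(1-|a|)^{\varepsilon_0}\log\frac{1}{1-|z|}$ can still be large when $1-|z|$ is exponentially small, one should instead absorb the log into the polynomial gain by using a slightly smaller power, e.g.\ noting $(1-|z|)^{-1} \le (1-|a|)^{-1}$ is false in general, so one really integrates $\int_0^{|z|}(1-t)^{-1}\,dt \le \log\frac{1}{1-|z|}$ and then uses $\int_{D(1,\varepsilon)}\log\frac{1}{1-|z|}\,\omega(z)\,dA(z) < \infty$ (finite since $\omega$ is integrable and the log is integrable against area measure), so the whole bound is $\lesssim (1-|a|)^{\frac56(\gamma+1)}/\omega(S(a)) \lesssim (1-|a|)^{\frac56\gamma-\frac16}/\widehat\omega(|a|) \to 0$. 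Assembling the two regional estimates and letting $|a| \to 1$ (after first fixing $\varepsilon$, using property (ii)-type smallness, then letting $\varepsilon \to 0$ if needed) yields the claim.
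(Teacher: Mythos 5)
Your overall strategy and the exponent bookkeeping coincide with the paper's: off $J(a)$ one has the uniform bound $|f_a(z)|\lesssim(1-|a|)^{\frac56(\gamma+1)}/\omega(S(a))\simeq(1-|a|)^{\frac56\gamma-\frac16}/\widehat\omega(|a|)$, and the task is to multiply this by a finite, $a$-independent quantity measuring the size of $g'$. The gap is in how you produce that finite quantity. You control $g'$ only through the Bloch-type bound $|g'(z)|\lesssim(1-|z|)^{-1}$, integrate radially to get $|T_gf_a(z)|\lesssim C_a\log\frac{e}{1-|z|}$, and then assert that $\int_{\mathbb D}\log\frac{e}{1-|z|}\,\omega(z)\,dA(z)<\infty$ because $\omega$ is integrable and the logarithm is area-integrable. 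That inference is false for general $\omega\in\widehat{D}$: take $\omega(r)=\bigl((1-r)\log^2\frac{e}{1-r}\bigr)^{-1}$, which is integrable, satisfies the doubling condition (so $\omega\in\mathcal I\subset\widehat{D}$), and yet $\int_0^1\log\frac{e}{1-r}\,\omega(r)\,dr=\int_0^1\frac{dr}{(1-r)\log\frac{e}{1-r}}=\infty$. So your final bound is $C_a\cdot\infty$, and the argument collapses precisely for the rapidly increasing weights that the paper is designed to cover. You half-anticipate this in your last paragraph, but the fix you settle on is exactly the false integrability claim; the alternative you mention (trading the log for a smaller power of $1-|a|$) also fails, as you note, since $1-|z|$ can be far smaller than any power of $1-|a|$ on $\mathbb D\setminus J(a)$.

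The paper avoids the issue by never reducing $g'$ to its sup-norm growth. It expresses the $A^1_\omega$-norm via the square-function identity \eqref{equiv2} with $n=1$, pulls the uniform bound for $|f_a|$ on $\mathbb D\setminus J(a)$ out of the inner integral, and is left with $\int_{\mathbb D}\bigl(\int_{\Gamma(u)}|g'(z)|^2dA(z)\bigr)^{1/2}\omega(u)\,dA(u)\simeq\|g\|_{A^1_\omega}$, which is finite since $C^1(\omega^*)\subset A^1_\omega$ (Proposition 5.1 of \cite{PR}). If you want to keep a pointwise route, you must exploit the Carleson-measure information encoded in $g\in C^1(\omega^*)$ (e.g.\ Cauchy--Schwarz on the radial integral against $\omega^*$), not merely the Bloch bound. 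A secondary point: the characterization $M_\infty(g',r)\lesssim(\omega^*(r))^{(\alpha-1)/2}/(1-r)$ you cite is stated only for $\alpha>1$; for $\alpha=1$ the one-sided estimate $|g'(z)|\lesssim(1-|z|)^{-1}$ does hold but needs a separate (easy, subharmonicity) justification.
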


\begin{proof}
We may assume that $g(0)=0$ and $0<a<1$ due to rotation invariance. It is not hard to see that  for all $0\leq r<1$ and $|\theta|\leq \pi$,
 we have $|1-are^{i\theta}|\geq c|\theta|$, where $c>0$ is an absolute constant.  For $z\in \mathbb D\setminus J(a)$ and $a>\frac{1}{2}$, we have
 $$|f_a(z)|\lesssim \frac{(1-a)^{\gamma+1}}{|\theta|^{\gamma+1}\omega(S(a))}\leq \frac{(1-a)^{\frac{5}{6}(\gamma+1)}}{\omega(S(a))}\simeq\frac{(1-a)^{\frac{5}{6}(\gamma+1)}}
 {(1-a)\int_{|a|}^1\omega(s)ds}=\frac{(1-a)^{\frac{5}{6}\gamma-\frac{1}{6}}}{\int_{|a|}^1\omega(s)ds}.$$
 Proposition 5.1 in \cite{PR} and $g\in C^1(\omega^*)$ imply that $g\in A_\omega^1$.
Therefore,  by using equation (\ref{equiv2}) twice we obtain
\begin{eqnarray*}
&&\int_{\mathbb D\setminus J(a)}|T_gf_a(z)|\omega(z)dA(z)\\
&&\simeq \int_{\mathbb D}\left(\int_{\Gamma(u)\setminus J(a)}|f_a(z)|^2|g'(z)|^2dA(z)\right)^{\frac{1}{2}}\omega(u)dA(u)\\
&&\lesssim \frac{(1-a)^{\frac{5}{6}\gamma-\frac{1}{6}}}{\int_{|a|}^1\omega(s)ds}\|g\|_{A_\omega^1},
\end{eqnarray*} which tends to 0 as $|a|\to 1$. The proof is complete.
\end{proof}

\begin{proof}[Proof of Theorem \ref{theorem2}]
Since compact operators are also weakly compact, we have
\begin{displaymath}
\lVert T_g \rVert_{w, A^1_\omega \to A^1_\omega} \le \lVert T_g \rVert_{e, A^1_\omega \to A^1_\omega} \lesssim \limsup_{|I|\to0}\left(\frac{\int_{S(I)}|g'(z)|^2\omega^*(z)dA(z)}{\omega(S(I))}\right)^{1/2}.
\end{displaymath}
To verify the lower estimate for $\|T_g\|_{w,A_\omega^1\rightarrow A_\omega^1}$, suppose that $W\colon A^1_\omega \to A^1_\omega$ is any weakly compact operator. Assume that $J(a)$ and $f_a$ satisfy the conditions from Lemma \ref{integrallimit}.
As $\|f_a\|_{A_\omega^1}\simeq 1$, we have
\begin{eqnarray}
\label{eq: estimate21}
&& \| T_g - W \| \gtrsim \| (T_g - W)f_a \|_{A^1_\omega} = \int_{\mathbb{D}}|T_g f_a(z) - W f_a(z)|\omega(z) dA(z)
\nonumber \\
&&\ge \int_{J(a)}|T_g f_a(z) - W f_a(z)|\omega(z) dA(z)\\
&& \ge \int_{J(a)} |T_g f_a(z)| \omega(z) dA(z) -  \int_{J(a)} |W f_a(z)| \omega(z) dA(z).\nonumber
\end{eqnarray}
Now by Lemma \ref{integrallimit}
\begin{displaymath}
\lim_{|a| \to 1} \int_{\mathbb{D}\setminus J(a)} |T_g f_a(z)| \omega(z) dA(z) = 0,
\end{displaymath}
and therefore
\begin{equation}
\label{eq: limit11}
\limsup_{|a| \to 1} \int_{J(a)} |T_g f_a(z)|\omega(z) dA(z) = \limsup_{|a| \to 1} \lVert T_g f_a \rVert_{A^1_\omega}.
\end{equation}
Since the set $\{Wf_a \colon a \in \mathbb{D}\}$ is relatively weakly compact in $A^1_\omega$ and hence equi-integrable in $L^1_\omega$, it holds that
\begin{equation}
\label{eq: limit21}
\lim_{|a| \to 1} \int_{J(a)} |W f_a(z)| \omega(z) dA(z) = 0
\end{equation}
by the Dunford-Pettis criterion. Now taking $\limsup_{|a| \to 1}$ in \eqref{eq: estimate21} we have
\begin{displaymath}
\lVert T_g - W \rVert \ge \limsup_{|a| \to 1} \lVert T_g f_a \rVert_{A^1_\omega}
\end{displaymath}
by \eqref{eq: limit11} and \eqref{eq: limit21}. Hence by taking infimum over all weakly compact operators $W\colon A^1_\omega \to A^1_\omega$ we get
\begin{displaymath}
\lVert T_g \rVert_{w, A^1_\omega \to A^1_\omega} \ge \limsup_{|a| \to 1} \lVert T_g f_a \rVert_{A^1_\omega}.
\end{displaymath}
Finally, it follows from Lemma \ref{Tgfa} that
\begin{displaymath}
 \limsup_{|a| \to 1} \lVert T_g f_a \rVert_{A^1_\omega} \gtrsim \limsup_{|I|\to 0}\left(\frac{\int_{S(I)}|g'(z)|^2\omega^*(z)dA(z)}{\omega(S(I))}\right)^{1/2}.
\end{displaymath} The last inequality, along with Lemma \ref{distance} completes the proof.
\end{proof}

\end{document}